\documentclass[12pt]{amsart}
\usepackage[colorlinks=true,citecolor=blue,linkcolor=blue, backref=page]{hyperref}
\usepackage{amssymb,verbatim,amscd,amsmath,graphicx,nccmath}
\usepackage{mathrsfs}
\usepackage{upgreek}
\usepackage{enumerate}
\usepackage{graphicx}
\usepackage{caption}
\usepackage{subcaption}
\usepackage{pdfsync}
\usepackage{color,colortbl}
\usepackage{fullpage}
\usepackage{bbm}
\usepackage{comment}
\usepackage{multirow}
\usepackage{enumitem}
\usepackage{comment}
\usepackage{cleveref}
\numberwithin{equation}{section}
\newtheorem{introthm*}{Main Result}
\newtheorem{theorem}{Theorem}[section]
\newtheorem{lemma}[theorem]{Lemma}
\newtheorem{proposition}[theorem]{Proposition}
\newtheorem{corollary}[theorem]{Corollary}

\theoremstyle{definition}
\newtheorem{definition}[theorem]{Definition}

\newtheorem{def-prop}[theorem]{Definition-Proposition}

\newtheorem{remark}[theorem]{Remark}
\newtheorem{example}[theorem]{Example}

\newtheorem*{acknowledgment}{Acknowledgments}
\newtheorem{notation}[theorem]{Notation}

\newcommand{\ZZ}{{\mathbb Z}}
\newcommand{\NN}{{\mathbb N}}

\newcommand{\RR}{{\mathbb R}}

\newcommand{\kk}{{\mathbbm k}}

\newcommand{\floor}[1]{\lfloor #1 \rfloor}

\def\C{{\mathcal C}}

\def\R{{\mathcal R}}

\def\R{{\mathcal R}}

\def\1{{\bf 1}}
\def\0{{\bf 0}}

\begin{document}
	
	\title{Graded Families of ideals and Convex Regions}
	
	\author{Haoxi Hu}
	\address{Tulane University, Department of Mathematics and Statistics,
		6823 St. Charles Avenue
		New Orleans, LA 70118, USA}
	
	\email{hhu5@tulane.edu}

	\keywords{Okounkov bodies, Resurgence number}
	\subjclass[2020]{13A18, 14M25, 13A30}
	
	\begin{abstract}
We study the interplay between \emph{graded families of ideals} in $\kk$-domains and their associated convex regions. These regions, called \emph{Newton-Okounkov regions}, arise naturally from \emph{graded families of ideals} associated to a \emph{valuation with one-dimensional leaves}. Our main focus is to compute \emph{asymptotic resurgence number} of a pair of \emph{graded families of ideals}. By combining techniques from \emph{Attouch--Wets topology} and convex-geometric properties of \emph{Newton-Okounkov regions}, we characterize the \emph{asymptotic resurgence number} through containment relations between the pair of corresponding \emph{Newton-Okounkov regions}.
	\end{abstract}
	\maketitle
	\section{Introduction}
	
Given a sequence of ideals $\{ \mathfrak{a}_{1}, \mathfrak{a}_{2}, \dots \}$ of a ring $R$, denoted as $\mathfrak{a}_{\bullet}$, then it is a \emph{graded family of ideals} if $\mathfrak{a}_{i} \cdot \mathfrak{a}_{j} \subseteq \mathfrak{a}_{i+j}$ for all $i,j \geq 1$. In particular, the \emph{graded family of ideals} is a \emph{filtration} if $\mathfrak{a}_{n+1} \subseteq \mathfrak{a}_{n}$ for all $n \geq 1$. Let $\mathfrak{a}_{\bullet}$ and $\mathfrak{b}_{\bullet}$ be graded families of ideals in a Notherian commutative ring $R$. \cite{HA2026} firstly defined the \emph{resurgence number} and the \emph{asymptotic resurgence number} of a pair of families of ideals as follows.

\vspace{2mm}

1. \emph{Resurgence number of the pair $(\mathfrak{a}_{\bullet}, \mathfrak{b}_{\bullet})$}:
\begin{align*}
\rho(\mathfrak{a}_{\bullet}, \mathfrak{b}_{\bullet}) =  \sup \left\{ \frac{s}{r} ~|~ \mathfrak{a}_{s} \not\subseteq \mathfrak{b}_{r}, s \geqslant 1, r \geqslant 1 \right\}
\end{align*}

2. \emph{Asymptotic resurgence number of the pair $(\mathfrak{a}_{\bullet}, \mathfrak{b}_{\bullet})$}: 
\begin{align*}
\hat{\rho}(\mathfrak{a}_{\bullet}, \mathfrak{b}_{\bullet}) = \sup \left\{ \frac{s}{r} ~|~ \mathfrak{a}_{st} \not\subseteq \mathfrak{b}_{rt}, s \geqslant 1, r \geqslant 1, t \gg 0 \right\}
\end{align*}

\vspace{2mm}

These definitions are inspired by \emph{ideal containment problems} that investigate when symbolic powers of an ideal are contained in its ordinary powers. After notable work done by Ein, Lazarsfeld and Smith \cite{EinLazarsfeldSmith} and Hochster and Huneke \cite{HochsterHuneke}, \emph{ideal containment problems} grew rapidly to an active field of research, and \emph{resurgence number} and the \emph{asymptotic resurgence number} were introduced as essential measures for non-containment between symbolic powers and ordinary powers of an ideal (cf. \cite{1,2,BocciHarbourne2010,3,4,5,6,7,8,9,VILLARREAL2023103656}). However, \emph{resurgence} and \emph{asymptotic resurgence} are very difficult to compute, only few classes of ideals are known. For instance, \emph{resurgence number} of \emph{star configuration of hyperplanes} (see \cite{BocciHarbourne2010}). 

In recent studies, Villarreal provides an understanding of \emph{resurgence number} through polyhedron \cite[Theorem 3.7]{VILLARREAL2023103656}: The inverse of the \emph{resurgence number} of a squarefree monomial ideal $I$ is equal to minimal inner product of vertices, such that one comes from \emph{covering polyhedra} of $I$ and another one is from its \emph{Alexander dual}. Inspired by Villarreal's idea, \cite{rncb} studies \emph{asymptotic resurgence number of a pair of graded families of monomial ideals} via associated convex regions constructed from \emph{Newton-Okounkov bodies} (see \cite{nob}). The original definition was first introduced by Kiumars Kaveh and A.G. Khovanskii in \cite{bodiesmultiplicity}, and it was called \emph{$\C$-convex region} and has been studied more recently (cf.   \cite{MultiplicitiesAT,CUTKOSKY201455,bodiesmultiplicity,ntbr,nob,rncb}). In this paper, we give the definition of \emph{$\C$-convex region} in different perspective, and call it \emph{Newton-Okounkov region of a graded family of ideals} (see \Cref{definition of NO region}). Our goal is to generalize \cite[Theorem 2.7]{rncb}: Given a pair of \emph{graded families of monomial ideals} $(\mathfrak{a}_{\bullet}, \mathfrak{b}_{\bullet})$ of a polynomial ring over a field, if the \emph{associated Rees algebra} of $\mathfrak{b}_{\bullet}$ is Noetherian, then $\hat{\rho}(\mathfrak{a}_{\bullet}, \mathfrak{b}_{\bullet}) = \sup \{ \lambda > 0 ~|~ \lambda \cdot \Gamma(\mathfrak{a}_{\bullet}) \not\subseteq \Gamma(\mathfrak{b}_{\bullet}) \}$. $\Gamma(\mathfrak{a}_{\bullet})$ is the \emph{Newton-Okounkov region} of $\mathfrak{a}_{\bullet}$, similarly for $\Gamma(\mathfrak{b}_{\bullet})$. 

We generalize \cite[Theorem 2.7]{rncb} to a broader setting by considering initial ideals associated with a \emph{valuation with one-dimensional leaves} on a Noetherian $\kk$-domain, rather than monomial ideals in a polynomial ring over a field. Moreover, we extend the theorem in two independent directions. One focuses on \emph{Newton-Okounkov region}, \textbf{Result 1} has no restrictions on associated Rees algebra while requires \emph{shortest distance} between boundaries of the pair of associated \emph{Newton-Okounkov regions} is not zero. The other direction extends \cite[Theorem 2.7]{rncb} directly from monomial ideals to initial ideals in the new settings, which leads to our \textbf{Result 2}.

\begin{introthm*}[\Cref{theorem: resurgence number = lambda}]
Let $R$ be a Noetherian $\kk$-domain, and define a valuation $\nu$ with one-dimensional leaves on $R$, and $S(R) = \ZZ^n_{\geq 0}$. Given a pair of graded families of ideals $(\mathfrak{a}_{\bullet}, \mathfrak{b}_{\bullet})$ of $R$, and the asymptotic resurgence number $\hat{\rho}(in_{\nu}(\mathfrak{a}_{\bullet}), in_{\nu}(\mathfrak{b}_{\bullet}))$ exists, then
\begin{align*}
\hat{\rho}(in_{\nu}(\mathfrak{a}_{\bullet}), in_{\nu}(\mathfrak{b}_{\bullet})) & \geq \inf \{ \lambda > 0 ~|~ \lambda \cdot \Gamma(\mathfrak{a}_{\bullet}) \subseteq \Gamma(\mathfrak{b}_{\bullet}) \} \\ 
& = \sup \{ \lambda > 0 ~|~ \lambda \cdot \Gamma(\mathfrak{a}_{\bullet}) \not\subseteq \Gamma(\mathfrak{b}_{\bullet}) \}
\end{align*}

If for any $\lambda > \inf \{ \lambda > 0 ~|~ \lambda \cdot \Gamma(\mathfrak{a}_{\bullet}) \subseteq \Gamma(\mathfrak{b}_{\bullet}) \}$, we have $\inf \{ d(x_1, x_2) ~|~ x_1 \in \lambda \Gamma(\mathfrak{a}_{\bullet}), x_2 \in \partial \Gamma(\mathfrak{b}_{\bullet}) \} > 0$ where $\partial \Gamma(\mathfrak{b}_{\bullet})$ is the boundary of $\Gamma(\mathfrak{b}_{\bullet})$, then we have the following equality:
\begin{align*}
\hat{\rho}(in_{\nu}(\mathfrak{a}_{\bullet}), in_{\nu}(\mathfrak{b}_{\bullet})) &= \inf \{ \lambda > 0 ~|~ \lambda \cdot \Gamma(\mathfrak{a}_{\bullet}) \subseteq \Gamma(\mathfrak{b}_{\bullet}) \} \\
&= \sup \{ \lambda > 0 ~|~ \lambda \cdot \Gamma(\mathfrak{a}_{\bullet}) \not\subseteq \Gamma(\mathfrak{b}_{\bullet}) \}
\end{align*}

\end{introthm*}

The containment of \emph{Newton-Okounkov regions} always exists but \emph{asymptotic resurgence number} does not always exist, see \Cref{eg no asy res num}. We investigate the existence of the \emph{asymptotic resurgence number}. We show that it exists whenever the \emph{shortest distance} from the \emph{Newton--Okounkov region} to the boundary of $\RR_{\geq 0}^n$ is positive, see \Cref{lemma: distance of two regions}. It follows that \Cref{thm: res num = inf NO} presents a different formulation of \textbf{Result 1} that does not assume the existence of the \emph{asymptotic resurgence number}.

\begin{introthm*}[\Cref{res = region in Notherian case}]
Let $R$ be a Noetherian $\kk$-domain, and define a valuation $\nu$ with one-dimensional leaves on $R$. Given a pair of graded families of ideals $(\mathfrak{a}_{\bullet}, \mathfrak{b}_{\bullet})$ of $R$, if the Rees algebra $\mathcal{R}(in_{\nu}(\mathfrak{b}_{\bullet}))$ is Noetherian, then
\begin{align*}
\hat{\rho}(in_{\nu}(\mathfrak{a}_{\bullet}), in_{\nu}(\mathfrak{b}_{\bullet})) &= \inf \{ \lambda > 0 ~|~ \lambda \cdot \Gamma(\mathfrak{a}_{\bullet}) \subseteq \Gamma(\mathfrak{b}_{\bullet}) \} \\
&= \sup \{ \lambda > 0 ~|~ \lambda \cdot \Gamma(\mathfrak{a}_{\bullet}) \not\subseteq \Gamma(\mathfrak{b}_{\bullet}) \}
\end{align*}
\end{introthm*}

\emph{Noetherianity} of associated Rees algebra is the main difference between these two results. \textbf{Result 2} requires \emph{Noetherianity}, so we summarize the classes of families of ideals admitting the Noetherian Rees algebras as follows. We assume $R$ is a Noetherian ring and $I$ is an ideal of $R$, and $\R$ is the notation for Rees algebra.
\begin{itemize}
\item[(1)] $\R(I)$, the Rees algebra associate to \emph{$I$-adic filtration}.
\item[(2)] $\R(\mathfrak{a}_{\bullet})$, the Rees algebra associated to  $\mathfrak{a}_{\bullet}$ where $\mathfrak{a}_{\bullet}$ is an \emph{$I$-stable filtration} (or \emph{$I$-good filtration}), see \Cref{stable filtration}.
\item[(3)] $\R(\bar{I})$, the Rees algebra associated to \emph{integral closure filtration of $I$}, and $R$ is an analytically unramified Noetherian local ring, see \Cref{integral closure filtration}.
\end{itemize}
There are plenty of classes of families of ideals may not admitting \emph{Noetherianity} on associated Rees algebra. For an example, consider the graded family of \emph{symbolic powers of $I$}, its associated Rees algebra is usually not Noetherian. Then \textbf{Result 1} can be applied.

By the nature of the \emph{valuation}, \emph{Newton-Okounkov regions} develop the invariant to \emph{initial ideals} but not to the corresponding ideals directly, so we introduce a new concept called \emph{compatibility condition} (see \Cref{compatibility}), allowing us to pass \textbf{Result 1} and \textbf{Result 2} from \textbf{initial ideals} to \textbf{all ideals}. The paper provides plenty classes of ideals that satisfy the \emph{compatibility condition} as follows.
\begin{itemize}
\item[(1)] Consider $R$ (a $\kk$-algebra and a domain) as graded algebra associated to a \emph{valuation with one-dimensional leaves}, then \emph{families of homogeneous ideals} of $R$ satisfy \emph{compatibility condition}, see \Cref{homogenous ideal and initial ideal}.
\item[(2)] \emph{families of symbolic powers of monomial ideals}, see \Cref{symbolic power}. 
\item[(3)] \emph{families of determinantal ideals}, see \Cref{determinantal ideal}.
\end{itemize}

A key feature of our paper is the new methodology developed in the proof of the main results, whereby we establish essential tools bridging algebraic properties of family of ideals with convex geometry. We begin by studying the convex-geometric properties of \emph{Newton-Okounkov regions}. In particular, when $R$ is Noetherian, each member of a \emph{graded family of ideals} corresponds to a polyhedron (called \emph{valued polyhedron}), and its \emph{recession cone} is the convex hull of the \emph{semigroup} of $R$. Then we work with \emph{Attouch-Wets topology}, and construct a sequence of \emph{valued polyhedra} that is \emph{Attouch-Wets convergent} to the \emph{Newton-Okounkov region} (see \Cref{prop: NP Attouch-Wets converges to NR}). This sequence of \emph{valued polyhedra}, along with convex-geometric properties of \emph{valued polyhedron} (see \Cref{properties of NO regions}), enable us to control the \emph{distance} between boundaries of the pair of associated \emph{Newton-Okounkov regions} and detect \emph{ideal containment} simultaneously.

\vspace{2mm}

\begin{acknowledgment}
We are thankful to Professor Tai Huy Ha for helpful discussions and suggestions.
\end{acknowledgment}

\vspace{2mm}

\section{Valuations, Newton-Okounkov regions, and Attouch-Wets topology}\label{section 2}

\vspace{2mm}

\begin{definition}
Let us recall that a \emph{valuation} on a domain $R$ is a function $\nu: K \rightarrow G$ where $G$ is an ordered abelian group and $K$ is the field of fraction of $R$, it satisfies the following axioms:
\begin{itemize}
\item[(1)] $\nu(0) = + \infty$.
\item[(2)] $\nu(xy) = \nu(x) + \nu(y)$ for $x,y \in K$.
\item[(3)] $\nu(x+y) \geq \min \{ \nu(x),\nu(y) \}$ for $x,y \in K$.
\end{itemize}

In particular, our $R$ is a $\kk$-domain, we restrict the valuation with more conditions:
\begin{itemize}
\item[(4)] $G$ is totally ordered.
\item[(5)] $\nu(k) = 0$ for all $k \in \kk$.
\item[(6)] $\nu(x) \geq 0$ for all $x \in R$.
\end{itemize}

\end{definition}

\begin{notation}
Throughout this paper, we define and use the following notations unless otherwise stated.
\begin{itemize}
\item $R$ : a $\kk$-algebra and a domain.
\item $\nu$ : \emph{valuation} on $R$. We assume $G$ is totally ordered $\ZZ^n$.
\item $S(R)$ : \emph{valuation semigroup} of $R$, i.e. $S(R) = \{ \nu(f) ~|~ f \neq 0 \in R \}$.
\item $F_{\succeq \alpha}$ : a \emph{$\nu-$filtration} on $R$ with $\alpha$. $F_{\nu \succeq \alpha} = \{ f \in R^{\times} ~|~ \nu(f) \succeq \alpha  \} \cup \{0\}$ where $\alpha \in G$.
\item  $F_{\succ \alpha}$ : it is defined similarly.
\end{itemize}
\end{notation}

\begin{definition}
Let $gr_{\nu}(R) = \oplus_{\alpha \in G} F_{\succeq \alpha} / F_{\succ \alpha}$, it is called \emph{Associated graded $R$} with \emph{valuation} $\nu$. Observe that every \emph{valuation} can induce a \emph{associated graded $R$}, and it is isomorphic to $R$ if it is graded by totally ordered abelian group (not necessary $\ZZ^n$). 

We can define the \emph{initial form of $f \in R$} associated to a \emph{valuation $\nu$} and \emph{initial ideal of $I$} associated to a \emph{valuation $\nu$} in $R$, denoted as $in_{\nu}(f)$ and $in_{\nu}(I)$ respectively, as follows:
\begin{align*}
in_{\nu}(f) &= f ~ \text{mod} ~ F_{\succ \alpha} ~ \text{where} ~ \alpha ~ \text{is the largest number such that} ~ f \in F_{\succeq \alpha} ~ \\ 
in_{\nu}(I) &= \, <in_{\nu}(f) ~|~ f \in I> 
\end{align*}

Observe that $\nu(f) = \alpha$. Indeed, we can consider $f$ as a finite sum of homogeneous components of $gr_{\nu}(R)$, and third axiom of \emph{valuation} forces $\nu(f) = \alpha$. The most famous example of such valuation is \emph{Gr\"obner valuation}. From now on, we fix $\alpha$ to be $\nu(f)$. 

Given a family of ideals $\mathfrak{a}_{\bullet} = \{ \mathfrak{a}_1, \, \mathfrak{a}_2, \, \mathfrak{a}_3, \, ... \,  \}$ of $R$, We can also define its \emph{initial family of of ideals associated to a valuation $\nu$} as follows:
\begin{align*}
in_{\nu}(\mathfrak{a}_{\bullet}) = \{ in_{\nu}(\mathfrak{a}_1), \, in(\mathfrak{a}_2), \, in_{\nu}(\mathfrak{a}_3), \, ... \, \}
\end{align*}

We say the valuation $\nu$ has \emph{one-dimensional leaves} if for every $\alpha \in G$ the quotient vector space $F_{\succeq \alpha} / F_{\succ \alpha}$ is at most 1-dimensional as a $\kk$-vector space.

\end{definition}

\begin{example}
$\nu$-filtration and $gr(R)$ can ne formulated in the similar construction in \cite[section 5]{CommutativeAlgebra}, which uses language of filtration of modules. In particular, the filtration is defined as direct sum of $M_i / M_{i+1}$ where $M_i \supset M_{i+1}$ for all $i \in \ZZ_{\geq 0}$, and these modules are called \emph{associated graded modules}.
\end{example}

\begin{example}\label{Groebner valuation}
Define \emph{Gröbner valuation} $\nu$ on $R = \kk[[x_1, \, x_2, \, ... \, x_n]]$ with totally ordered $\ZZ^n_{\geq 0}$. Let $f$ be the element of $\kk[[x_1, \, x_2, \, ... \, x_n]]$, then define $\nu(f) = \min \{ (a_1,\,a_2,\,...\,a_n) ~|~ cx_1^{a_1}x_2^{a_2}...x_n^{a_n} $ is a term of $f \}$. Similarly, we can also define \emph{Gröbner valuation} on polynomial ring over a field. It is obvious that the \emph{Gröbner valuation} has \emph{one-dimensional leaves}, and $S(R)$ generates $\ZZ^n_{\geq 0}$.

\end{example}

The graded algebra $gr_{\nu}(R)$ is often discussed with \emph{Khovanskii bases}. Recall that \emph{Khovanskii basis} is a set $\mathcal{B} \subset R$ such that it generates $gr_{\nu}(R)$. The notable paper \cite{khovanskii} shows plenty of properties of \emph{Khovanskii basis}. \cite[Algorithm 2.18]{khovanskii} gives an algorithm to compute the finite \emph{Khovanskii basis} via finite generating set of $R$. Furthermore, \cite[Lemma 3]{khovanskii} states \emph{Khovanskii basis} coincides algebra generating set under some conditions. These will make computation of the results of this paper possible if we assume $R$ is finitely generated.

After formally setting up the valuation part, we are ready to define the main subject. The \emph{Newton-Okounkov region} was first defined in \cite{bodiesmultiplicity}, where it was defined for the families of $\mathfrak{m}$-primary ideals on the Noetherian local domain of Krull dimension $n$ over a field $\kk$, and with maximal ideal $\mathfrak{m}$. In this paper, we will introduce \emph{Newton-Okounkov regions} through the language of rings and valuations.

\begin{definition}\label{definition of NO region}

Given a \emph{valuation $\nu$} on $R$ and a graded family $\mathfrak{a}_{\bullet}$ of nonzero ideals in $R$, define \emph{valued region of} $\mathfrak{a}_{k}$ as the following:
\vspace{1mm}
\begin{align*}
\text{VR}(\mathfrak{a}_{k}) = \text{conv} \{ \nu(f) ~|~ f \in \mathfrak{a}_k \backslash \{0\} \}
\end{align*}
\vspace{1mm}
Then we define the associated \emph{Newton-Okounkov region} of $\mathfrak{a}_{\bullet}$ to be
\vspace{1mm}
\begin{align*}
\Gamma(\mathfrak{a}_{\bullet}) = \overline{\bigcup_{k \geq 1} \frac{1}{k} \text{VR}(\mathfrak{a}_{k}) }
\end{align*}
\vspace{1mm}
In particular, if $R$ is Noetherian, \emph{valued region} is a polyhedron (see \Cref{properties of NO regions}(4)), then we call it \emph{valued polyhedron}, denoted as $\text{VP}$. 
\end{definition}

We have seen that the \emph{valuation} of \emph{initial form} of $f$ is same as the \emph{valuation} of $f$ for any $f \in R$, so the $\text{VR}(\mathfrak{a}) = \text{VR}(in_{\nu}(\mathfrak{a}))$ for any $\mathfrak{a}\in \mathfrak{a}_{\bullet}$ and $\Gamma(\mathfrak{a}_{\bullet}) = \Gamma(in_{\nu}(\mathfrak{a}_{\bullet}))$. Therefore we will focus on the \emph{initial graded family of ideals}. Before we state the properties of \emph{valued region} and \emph{Newton-Okounkov region}, we need to introduce some basic facts of polyhedron.

\begin{definition}
Let $A$ be a convex set, the \emph{recession cone} of $A$ is the set $\text{rec}(A) = \{ y \in A ~|~ \forall x \in A, \, x + y \in A \}$. In other words, \emph{recession cone} is the direction of an unbounded convex set when it extends to infinity. When $A$ is bounded, the \emph{recession cone} is simply zero.
\end{definition}

\begin{theorem}\cite[Minkowski-Weyl Theorem]{Convexity} \label{Minkowski-Weyl Theorem}
Let $P$ be the polyhedron, and let $V$ be the set of \emph{extreme points} of $P$, then $P = conv(V) + rec(P)$.
\end{theorem}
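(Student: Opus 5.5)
We argue under the standing hypothesis that $P=\{x\in\RR^n \mid Ax\le b\}$ is \emph{pointed}, i.e.\ contains no line. Without it $V$ can be empty (think of a half-space), and the statement fails. In the paper's setting this is automatic, since all valued polyhedra lie in $\RR^n_{\ge 0}$. Note also that $\text{rec}(P)=\{y \mid Ay\le 0\}$ for a polyhedron. We prove the two inclusions separately.

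\textbf{The easy inclusion.} $P$ is convex, so $\text{conv}(V)\subseteq P$. By definition of the recession cone, $x+y\in P$ for every $x\in P$ and $y\in\text{rec}(P)$. Hence $\text{conv}(V)+\text{rec}(P)\subseteq P$.

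\textbf{The reverse inclusion.} We use the algebraic description of extreme points. For $x\in P$ let $I(x)$ be the set of constraints tight at $x$, and $A_{I(x)}$ the corresponding submatrix. A point $x$ is extreme if and only if $\text{rank}\,A_{I(x)}=n$.
\begin{itemize}
\item If the rank is $<n$, pick $d\ne 0$ with $A_{I(x)}d=0$. Then $x\pm\varepsilon d\in P$ for small $\varepsilon>0$, so $x$ is not extreme.
\item If the rank is $n$ and $x=\tfrac12(y+z)$ with $y,z\in P$, then tightness forces $A_{I(x)}y=A_{I(x)}z=b_{I(x)}$, hence $y=z=x$.
\end{itemize}
In particular $V$ is finite, since each vertex is determined by its set of tight constraints.

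We now show $x\in\text{conv}(V)+\text{rec}(P)$ by downward induction on $r(x)=\text{rank}\,A_{I(x)}$. If $r(x)=n$, then $x\in V$ and we are done.

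Suppose $r(x)<n$, and choose $d\ne 0$ with $A_{I(x)}d=0$. Consider the line $x+td$, $t\in\RR$. Because $P$ contains no line, the line cannot lie entirely in $P$, so the two rays $\{x+td\}_{t\ge0}$ and $\{x-td\}_{t\ge0}$ cannot both stay in $P$. There are two cases.
\begin{itemize}
\item \textbf{Both rays leave $P$.} Let $t_+,t_->0$ be maximal with $x+t_+d\in P$ and $x-t_-d\in P$. At each of the endpoints $x_\pm=x\pm t_\pm d$, some previously non-tight constraint becomes tight. That constraint row is not in the span of $A_{I(x)}$, because it is nonzero on $d$ while every row of $A_{I(x)}$ vanishes on $d$. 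Hence $r(x_\pm)>r(x)$. Since $x$ is a convex combination of $x_+$ and $x_-$, and $\text{conv}(V)+\text{rec}(P)$ is convex, the induction hypothesis gives the claim for $x$.
\item \textbf{Exactly one ray stays in $P$.} Say $x+td\in P$ for all $t\ge 0$. Then $Ad\le 0$, so $d\in\text{rec}(P)$. The other direction stops at $x'=x-t_-d$, where as above $r(x')>r(x)$. Then $x=x'+t_-d\in\big(\text{conv}(V)+\text{rec}(P)\big)+\text{rec}(P)=\text{conv}(V)+\text{rec}(P)$.
\end{itemize}
The induction terminates because the rank is bounded by $n$.

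\textbf{Main obstacle.} The step needing the most care is showing that $r$ strictly increases at the new endpoints, as argued in the first case above. One must also invoke pointedness explicitly to exclude the case where both rays stay in $P$.
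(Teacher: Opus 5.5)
Your proof is correct. The paper gives no proof of this statement; it only cites the Minkowski--Weyl theorem from Basu's textbook. Your induction on the rank of the tight constraints is a self-contained replacement for that citation. It is essentially the standard textbook proof. Extreme points are exactly the points whose tight rows have rank $n$. From any other point you move along a kernel direction of the tight rows until an independent constraint becomes tight. If one ray never leaves $P$, you absorb it into $\text{rec}(P)$.

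What your version adds is precision about hypotheses that the paper's statement omits. As printed, the statement fails for polyhedra containing a line: a half-space or a strip has $V=\emptyset$. It also fails under the paper's literal definition of $\text{rec}(A)$, which requires $y\in A$. For $A=(1,1)+\RR^2_{\ge 0}$ that definition gives $\text{rec}(A)=A$, hence $\text{conv}(V)+\text{rec}(A)=(2,2)+\RR^2_{\ge 0}\neq A$. You correctly restrict to pointed $P$ and use the standard cone $\{y \mid Ay\le 0\}$. Your proof genuinely needs that standard cone when you place $d$ in $\text{rec}(P)$.

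Two side remarks, neither affecting your proof. First, you justify that pointedness is ``automatic'' because valued polyhedra lie in $\RR^n_{\ge 0}$. That holds when $S(R)\subseteq\ZZ^n_{\ge 0}$, as in the main theorems. It does not hold for an arbitrary total order on $\ZZ^n$, which the paper allows in Lemma~\ref{properties of NO regions}(4): under the lexicographic order, $(1,-1)$ is a nonnegative value. Second, in that lemma the paper uses the theorem in the converse direction, to conclude that $\text{conv}(V)+\text{conv}(S(R))$ is a polyhedron. Neither the stated theorem nor your proof covers that direction, and it would also need $\text{conv}(S(R))$ to be finitely generated.
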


\begin{lemma}\label{properties of NO regions}
Given a graded family of ideals $\mathfrak{a}_{\bullet}$ of $R$ and a valuation $\nu$. We have the following properties:
\begin{itemize}
\item[(1)] $in(\mathfrak{a}_{\bullet})$ is graded.
\item[(2)] If $\nu$ has one-dimensional leaves, then initial ideal is uniquely determined by its valued region. Furthermore, one initial ideal contains another initial ideal if only if the corresponding valued region contains another one.
\item[(3)] $\frac{1}{k} \text{VR}(\mathfrak{a}_k) \subseteq \frac{1}{tk} \text{VR}(\mathfrak{a}_{tk})$ for all $t \geq 1$, likewise for its initial family of ideals.
\item[(4)] If $R$ is Noetherian, valued region is a polyhedron (called valued polyhedron $VP$), and $\text{VP}(\mathfrak{a}) = conv(V) + conv(S(R))$ for any $\mathfrak{a} \in \mathfrak{a}_{\bullet}$, and $V$ is its set of extreme points. 
\item[(5)] If $R$ is Noetherian, and $S(R)$ generates $\ZZ^n_{\geq 0}$, then $\text{VP}(\mathfrak{a}) = conv(V) + \RR^n_{\geq 0}$ for any $\mathfrak{a} \in \mathfrak{a}_{\bullet}$, and $V$ is the set of extreme points. Furthermore, there exists a real number $r$ such that the boundary of $\Gamma(\mathfrak{a}_{\bullet})$ approaches or intersects the boundary of $(r+\RR^n_{\geq 0})$. In other words, $d(\partial (r+\RR^n_{\geq 0}), A) = 0$ where $A = \{ y ~|~ y \in \partial \Gamma(\mathfrak{a}_{\bullet}) \; \text{and} \; |y| \gg 0 \}$
\end{itemize}

\end{lemma}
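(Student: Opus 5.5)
I will prove the five items in order, each from the valuation axioms and the standing facts already recorded: $\nu(in_\nu(f))=\nu(f)$, hence $\text{VR}(\mathfrak{a})=\text{VR}(in_\nu(\mathfrak{a}))$, and the Minkowski--Weyl Theorem (\Cref{Minkowski-Weyl Theorem}).

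For (1) I will use that $gr_\nu(R)$ is a domain, which follows from additivity of $\nu$. So $in_\nu(f)in_\nu(g)=in_\nu(fg)$ for nonzero $f,g$, since $fg\in F_{\succeq\nu(f)+\nu(g)}\setminus F_{\succ \nu(f)+\nu(g)}$. Hence the generators of $in_\nu(\mathfrak{a}_i)in_\nu(\mathfrak{a}_j)$ are initial forms of elements of $\mathfrak{a}_i\mathfrak{a}_j\subseteq\mathfrak{a}_{i+j}$, and so $in_\nu(\mathfrak{a}_i)\,in_\nu(\mathfrak{a}_j)\subseteq in_\nu(\mathfrak{a}_{i+j})$.

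For (2), one-dimensional leaves give that each graded piece $F_{\succeq\alpha}/F_{\succ\alpha}$ is $0$ or $\kk\cdot\bar{x}_\alpha$. An initial ideal is homogeneous, so it equals $\bigoplus_{\alpha\in\nu(\mathfrak{a}\setminus 0)}\kk\bar{x}_\alpha$. It is therefore determined by the value set $\nu(\mathfrak{a}\setminus 0)$, and containment of initial ideals is equivalent to containment of value sets. To pass from value sets to valued regions, I will show that $\nu(\mathfrak{a}\setminus 0)$ is recovered from its convex hull. The claim is $\text{conv}(\nu(\mathfrak{a}\setminus0))\cap S(R)=\nu(\mathfrak{a}\setminus0)$, which I intend to get from the upward-closedness $\nu(\mathfrak{a})+S(R)\subseteq\nu(\mathfrak{a})$ plus saturation. \textbf{This is the main obstacle.} The integral points of a convex hull need not be values; for example, $(1,1)$ lies between $(2,0)$ and $(0,2)$. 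I expect the statement must be read as comparing valued regions only through the value sets, or with lattice-point saturation assumed. I would try to prove only the direction needed later (initial ideal containment implies region containment, which is immediate) and treat the converse via normality.

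For (3), I will take $f_1,\dots,f_m\in\mathfrak{a}_k$. Then $f_i^t\in\mathfrak{a}_{tk}$ by gradedness, and $\nu(f_i^t)=t\nu(f_i)$. Hence $t\,\text{VR}(\mathfrak{a}_k)\subseteq\text{VR}(\mathfrak{a}_{tk})$, which rescales to the claim.

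For (4), Noetherianity gives finite generators $g_1,\dots,g_r$ of $\mathfrak{a}$. Every $f\in\mathfrak{a}$ is $\sum h_ig_i$, and I would argue $\nu(f)\in\bigcup_i(\nu(g_i)+S(R))$ up to convex combination. For the leaves case, I would use $in_\nu(\mathfrak{a})$ finitely generated by images of finitely many elements (Noetherian $gr$ in the Khovanskii setting). This gives $\text{VR}(\mathfrak{a})=\text{conv}\{\nu(g_i)\}+\text{conv}(S(R))$. Since $S(R)$ is finitely generated here, $\text{conv}(S(R))$ is a polyhedral cone, the sum is a polyhedron, and Minkowski--Weyl identifies $\text{conv}(V)$ with its extreme points.

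For (5), $\text{conv}(S(R))=\RR^n_{\geq0}$ when $S(R)$ generates $\ZZ^n_{\geq0}$. For the boundary claim, every $\text{VP}$ has recession cone $\RR^n_{\ge0}$, so $\Gamma$ is contained in $r+\RR^n_{\ge 0}$ with $r$ the coordinatewise infimum. The boundary rays parallel to the coordinate axes then force points of $\partial\Gamma$ far out to approach $\partial(r+\RR^n_{\geq0})$.
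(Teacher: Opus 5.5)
Your arguments for (1) and (3) are the same as the paper's. On (2) your objection is correct. It is a defect of the statement, not of your plan. With one-dimensional leaves one has $in_\nu(\mathfrak a)=\bigoplus_{\alpha\in\nu(\mathfrak a\setminus\{0\})}F_{\succeq\alpha}/F_{\succ\alpha}$, so an initial ideal is determined by its value set. The paper's proof establishes exactly this and then tacitly identifies the value set with its convex hull $\text{VR}(\mathfrak a)$. Your lattice-point example becomes a genuine counterexample. In $\kk[x,y]$ with a Gröbner valuation, $(x^2,y^2)$ and $(x,y)^2$ are their own initial ideals. Both have valued region $\{(u,v)\in\RR^2_{\geq0} : u+v\geq 2\}$, yet $xy\notin(x^2,y^2)$. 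So the uniqueness claim is false, and so is the implication $\text{VR}(\mathfrak a)\supseteq\text{VR}(\mathfrak b)\Rightarrow in_\nu(\mathfrak a)\supseteq in_\nu(\mathfrak b)$. Only $in_\nu(\mathfrak a)\supseteq in_\nu(\mathfrak b)\Rightarrow\text{VR}(\mathfrak a)\supseteq\text{VR}(\mathfrak b)$ holds in general. The reverse direction needs an extra saturation hypothesis, e.g.\ $\text{VR}(\mathfrak a)\cap S(R)=\nu(\mathfrak a\setminus\{0\})$ for the larger ideal. One correction: you say the surviving direction is the one needed later. In fact the upper-bound half of the proof of \Cref{theorem: resurgence number = lambda} deduces containment of initial ideals from $\text{VP}(\mathfrak b_{st})\supseteq\text{VP}(\mathfrak a_{rt})$, i.e.\ it uses precisely the false implication.

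For (4), your first argument fails because initial forms can cancel. Take the lexicographic Gröbner valuation on $\kk[x,y]$, where $\nu(f)$ is the lexicographically smallest exponent $(a,b)$ of a term $x^ay^b$ of $f$. The ideal $\mathfrak a=(y,\,y+x^2)$ has both generators of value $(0,1)$, while $x^2\in\mathfrak a$ has value $(2,0)\notin(0,1)+\RR^2_{\geq0}$. What you need is a finite set of elements of $\mathfrak a$ whose initial forms generate $in_\nu(\mathfrak a)$, together with polyhedrality of $\text{conv}(S(R))$. Your fallback asks for exactly this, but neither follows from $R$ being Noetherian. The paper's sentence ``every ideal is finitely generated, so is its initial ideal'' hides the same assumption. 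For instance, take $R=\kk[x+y,\,xy,\,xy^2]=\kk[x+y]+xy\,\kk[x,y]$ with the restriction of that valuation. Then $S(R)=\{(0,d)\}_{d\geq0}\cup\{(a,b): a,b\geq1\}$ is not finitely generated, and $\text{VR}(R)=\text{conv}(S(R))$ is not closed, hence not a polyhedron. The gap does close in the setting of (5) and of the main theorems. Suppose $S(R)=\ZZ^n_{\geq0}$ and the leaves are one-dimensional, and choose $0\neq\bar x_i$ of degree $e_i$. Since $gr_\nu(R)$ is a domain, $\bar x^{\alpha}\neq0$ spans the degree-$\alpha$ piece, so $gr_\nu(R)\cong\kk[x_1,\dots,x_n]$. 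Then $in_\nu(\mathfrak a)$ is a monomial ideal, Dickson's lemma gives finitely many generators, and your argument yields $\text{VP}(\mathfrak a)=\text{conv}(V)+\RR^n_{\geq0}$. The second half of (5) is as informal in the paper as in your sketch. To make it precise:
\begin{itemize}
\item[(i)] take $y\in\Gamma(\mathfrak a_\bullet)$ with $y_1<r_1+\delta$;
\item[(ii)] pass to $y+te_2$ with $t$ large, which stays in $\Gamma(\mathfrak a_\bullet)$ since its recession cone contains $\RR^n_{\geq0}$;
\item[(iii)] move in direction $-e_1$ until leaving $\Gamma(\mathfrak a_\bullet)$.
\end{itemize}
The exit point lies on $\partial\Gamma(\mathfrak a_\bullet)$, has norm at least $t$, and is within $\delta$ of $\partial(r+\RR^n_{\geq0})$. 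This requires $n\geq2$.
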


\begin{proof}
(1) We know that $\mathfrak{a}_i \cdot \mathfrak{a}_j \subseteq \mathfrak{a}_{i+j}$ for all $i,j$. Take $x \in \mathfrak{a}_i$ and $y \in \mathfrak{a}_j$, then $in_{\nu}(x) \in F_{\succeq \nu(x)} / F_{\succ \nu(x)}$ and $in_{\nu}(y) \in F_{\succeq \nu(y)} / F_{\succ \nu(y)}$. Therefore $in_{\nu}(x)$ is homogeneous component of $x$ which has same valuation of $x$ on $gr_{\nu}(R)$. Note that $in_{\nu}(x)$ is a lowest degree component with respect to $\nu$, and similarly for $in_{\nu}(y)$. Then $in_{\nu}(x) \cdot in_{\nu}(y) \in  F_{\succeq \nu(xy)}$, is exactly $in_{\nu}(xy) = xy ~ \text{mod} ~ F_{\succ \nu(xy)}$, so $in_{\nu}(\mathfrak{a}_{\bullet})$ is graded.

(2) By the definition of \emph{one-dimensional valuation}, homogeneous component of $\alpha \in \ZZ^n$ is unique up to scalars $\kk$, so different initial ideals corresponds to different sets of points in $\ZZ^n$, because initial ideal is generated by homogeneous components. The second statement follows from the fact that one initial ideal contains another initial ideal if one ideal contains all homogeneous components of another initial ideal.

(3) Since $t \mathfrak{a}_k = \mathfrak{a}_k^t \subseteq \mathfrak{a}_{tk}$ for all $t \geq 1$, then $\frac{1}{k} \text{VR}(\mathfrak{a}_k) \subseteq \frac{1}{tk} \text{VR}(\mathfrak{a}_{tk})$ for all $t \geq 1$.

(4) Since $R$ is Noetherian, every ideal is finitely generated, so is its initial ideal, say $in_{\nu}(\mathfrak{a})$. Following (2), the set of generators of $in_{\nu}(\mathfrak{a})$ gives a set of points in $\ZZ^n$, denoted as $V$, then the convex hull of $V$ is a polytope. By the definition of \emph{valued region}, we can write $\text{VR}(\mathfrak{a}) = conv(V) + conv(S(R))$. Now by \Cref{Minkowski-Weyl Theorem}, it is a \emph{polyhedron}.

(5) $\text{VP}(\mathfrak{a}) = conv(V) + \RR^n_{\geq 0}$ where $V$ is the finite set of \emph{extreme points} by (4). Then we can observe that for any $y \in \Gamma(\mathfrak{a}_{\bullet})$, $y + \RR^n_{\geq 0}$ is also contained in $\Gamma(\mathfrak{a}_{\bullet})$. Define $r_i = \inf \{ y_i ~|~ y = (y_1,y_2, ... \, ,y_n) \in \Gamma(\mathfrak{a}_{\bullet}) \}$, and $r = (r_1,r_2, ... \, ,r_n)$, then $d(\partial (r+\RR^n_{\geq 0}), A) = 0$ where $A = \{ y ~|~ y \in \partial \Gamma(\mathfrak{a}_{\bullet}) \; \text{and} \; |y| \gg 0 \}$

\end{proof}

We can observe that \emph{valued regions} and \emph{Newton-Okounkov regions} are unbounded convex sets. To study the properties of unbounded convex sets, we need a new method based on \emph{Attouch-Wes Topology}. The paper by Gerald Beer (see \cite{AttouchWets}) provides a concise introduction to the \emph{Attouch--Wets topology} and its fundamental properties, which are essential in the analysis of unbounded convex sets. Note that all information of \emph{Attouch-Wes Topology} comes from this paper.

\begin{definition}\label{definition: set-up for topology}

Let $(X,d)$ be the \emph{Topology $X$ of Hausdorff distance $d$}, for nonempty subsets $A, B \subseteq X$ and $\epsilon > 0$, define
\begin{align*}
A^{\epsilon} = \{ x \in X : d(x,A) < \epsilon \} \; \text{and} \; e_d(B,A) = \sup_{b \in B} d(b,A) = \inf \{ \epsilon > 0 : B \subseteq A^{\epsilon} \}
\end{align*}

\end{definition}

\begin{definition}\label{definition: Attouch-Wets Topology}
Let $\mathscr{B}_d$ is a collection of all open balls with fixed center at $0$ with integral radius. A sequence of sets $\{ A_i \}_{i \in \NN}$ in power sets of $X$ is called \emph{Attouch-Wets convergence} to an nonempty subset $A \subseteq X$ if for all $B \in \mathscr{B}_d$ and for all $\epsilon > 0$, we have the both followings for $i$ big enough.
\begin{align*}
A \cap B \subseteq A_{i}^{\epsilon} \; \text{and} \; A_i \cap B \subseteq A^{\epsilon}
\end{align*}
When this occurs, we will write $A = AW_d - \lim A_i$.

There is an \emph{equivalent} definition for \emph{Attouch-Wets convergence}, the followings are needed to be satisfied for each $B \in \mathscr{B}_d$:
\begin{align*}
\lim_{i \in \NN} e_d(A \cap B, A_i) = 0 \; \text{and} \; \lim_{i \in \NN} e_d(A_i \cap B, A) = 0
\end{align*}

\end{definition}

\begin{lemma}\label{lemma: finite properties of newton okounkov region}
Let $\mathfrak{a}_{\bullet}$ be the graded family of ideals of $R$, then $\Gamma(\mathfrak{a}_{\bullet}) = \overline{ \cup_{k \geq t_0} \frac{1}{k} \text{VR}(\mathfrak{a}_k)}$ for a positive integer $t_0$. In particular, $k$ can be replaced by $ak$ where $a$ is also a positive integer.

\end{lemma}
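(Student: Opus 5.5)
The plan is to prove the two inclusions between $\Gamma(\mathfrak{a}_{\bullet}) = \overline{\bigcup_{k \geq 1} \frac{1}{k}\text{VR}(\mathfrak{a}_k)}$ and $\Gamma' := \overline{\bigcup_{k \geq t_0} \frac{1}{ak}\text{VR}(\mathfrak{a}_{ak})}$ for arbitrary positive integers $t_0$ and $a$. Taking $a=1$ then gives the first statement, and general $a$ gives the ``in particular'' clause. Since both sets are closures, it suffices to compare the unions inside them.

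The inclusion $\Gamma' \subseteq \Gamma(\mathfrak{a}_{\bullet})$ is immediate. Each index $ak$ with $k \geq t_0$ is a positive integer, so the union defining $\Gamma'$ is a subfamily of the union defining $\Gamma(\mathfrak{a}_{\bullet})$. Taking closures preserves this containment.

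For the reverse inclusion, I will show that every term $\frac{1}{k}\text{VR}(\mathfrak{a}_k)$ with $k \geq 1$ is already contained in one of the terms of $\Gamma'$. Fix $k \geq 1$ and set $t = a t_0$, so that $tk = a(t_0 k)$ and $t_0 k \geq t_0$. By \Cref{properties of NO regions}(3),
\begin{align*}
\frac{1}{k}\text{VR}(\mathfrak{a}_k) \subseteq \frac{1}{tk}\text{VR}(\mathfrak{a}_{tk}) = \frac{1}{a(t_0k)}\text{VR}(\mathfrak{a}_{a(t_0k)}),
\end{align*}
and the right-hand side is one of the sets in the union defining $\Gamma'$. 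Hence $\bigcup_{k\geq 1}\frac{1}{k}\text{VR}(\mathfrak{a}_k) \subseteq \Gamma'$, and taking closures gives $\Gamma(\mathfrak{a}_{\bullet}) \subseteq \Gamma'$.

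The only step that needs care is the use of \Cref{properties of NO regions}(3), which relies on $\mathfrak{a}_k^t \subseteq \mathfrak{a}_{tk}$ together with $\nu(f_1\cdots f_t) = \sum \nu(f_i)$. This shows that $t\cdot\{\nu(f) : f \in \mathfrak{a}_k\} \subseteq \{\nu(g) : g \in \mathfrak{a}_{tk}\}$. One also needs the fact that $\text{conv}$ commutes with scaling by $t$, namely $t\,\text{conv}(X) = \text{conv}(tX)$. I would check this point explicitly, because the paper's proof of (3) writes ``$t\mathfrak{a}_k$'' loosely. With that verified, the argument above is complete.
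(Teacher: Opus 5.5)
Your proof is correct and follows essentially the same route as the paper: each term $\frac{1}{k}\text{VR}(\mathfrak{a}_k)$ is pushed into a later term of the subsequence via \Cref{properties of NO regions}(3), and the paper uses the same multiplier ($a t_1 t_0$ in its notation) for the ``in particular'' clause. The only differences are that you apply one uniform multiplier $at_0$ to every $k$ instead of splitting into cases, and you write out the trivial inclusion and the identity $t\,\text{conv}(X)=\text{conv}(tX)$, which the paper leaves implicit.
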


\begin{proof}

Following \Cref{properties of NO regions}, for any $\frac{1}{k} \text{VR}(\mathfrak{a}_k)$ such that $k < t_0$, we can always find an integer $b_k k \geq t_0$ where $b_k$ is also an integer, then $\frac{1}{k} \text{VR}(\mathfrak{a}_k) \subseteq \frac{1}{b_k k} \text{VR}(\mathfrak{a}_{b_k k})$. Therefore $\Gamma(\mathfrak{a}_{\bullet}) = \overline{ \cup_{k \geq t_0} \frac{1}{k} \text{VR}(\mathfrak{a}_k)}$. Now replacing $k$ by $ak$. Similarly, for any $t_1$ that is not the multiple of $a$, there always exists $\frac{1}{t_1} \text{VR}(\mathfrak{a}_{t_1}) \subseteq \frac{1}{a t_1 t_0} \text{VR}(\mathfrak{a}_{a t_1 t_0})$.

\end{proof}

\begin{proposition}\label{prop: NP Attouch-Wets converges to NR}
Define the sequence of valued regions of any graded family of ideals as follows
\begin{align*}
\{ B_t \}_{t \in \mathbb{Z}_{\geq 1}} \; \text{where} \; B_t = \frac{1}{t!} \text{VR}(\mathfrak{b}_{t!})
\end{align*}
Then $\Gamma(\mathfrak{b}_{\bullet}) = AW_d - \lim B_t$. In particular, the same statement is also true for any tail sequence of $\{ B_t \}$.
\end{proposition}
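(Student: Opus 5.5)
The plan is to exploit monotonicity. By \Cref{properties of NO regions}(3), applied with $k = t!$ and the integer multiplier $t+1$, we have $B_t = \frac{1}{t!}\text{VR}(\mathfrak{b}_{t!}) \subseteq \frac{1}{(t+1)!}\text{VR}(\mathfrak{b}_{(t+1)!}) = B_{t+1}$. So $\{B_t\}$ is an increasing chain of convex sets.

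Next I show that its union is dense in $\Gamma(\mathfrak{b}_{\bullet})$. Take any $k \geq 1$. Then $k \mid t!$ for every $t \geq k$, so \Cref{properties of NO regions}(3) gives $\frac{1}{k}\text{VR}(\mathfrak{b}_k) \subseteq B_t$. Hence
\begin{align*}
\bigcup_{k\geq 1}\tfrac{1}{k}\text{VR}(\mathfrak{b}_k) = \bigcup_{t \geq 1} B_t = \bigcup_{t\geq t_0} B_t,
\end{align*}
and therefore $\Gamma(\mathfrak{b}_{\bullet}) = \overline{\bigcup_{t} B_t}$. This is the same mechanism as in \Cref{lemma: finite properties of newton okounkov region}.

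I then verify the two conditions of \Cref{definition: Attouch-Wets Topology}. Write $\Gamma = \Gamma(\mathfrak{b}_{\bullet})$. The second condition is immediate: $B_t \subseteq \Gamma$, so $e_d(B_t \cap B, \Gamma) = 0$ for every $t$ and every ball $B$.

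For the first condition, fix an open ball $B$ of integral radius $N$ centered at $0$, and fix $\epsilon > 0$. The set $\overline{\Gamma \cap B}$ is compact, so it is covered by finitely many balls $U(y_j, \epsilon/2)$ with $y_j \in \Gamma \cap B$. Since $\Gamma$ is the closure of $\bigcup_t B_t$, each $y_j$ has a point $z_j$ in some $B_{t_j}$ with $d(y_j, z_j) < \epsilon/2$. Let $T = \max_j t_j$. By monotonicity, every $z_j$ lies in $B_t$ for all $t \geq T$. So for $t \geq T$, every $x \in \Gamma \cap B$ satisfies $d(x, B_t) < \epsilon$, that is, $\Gamma \cap B \subseteq B_t^{\epsilon}$.

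The only point needing care is this compactness step. Because $\Gamma$ is unbounded, the convergence cannot be uniform on all of $\Gamma$. The balls in $\mathscr{B}_d$ are exactly what make the finite-cover argument work.

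For the tail statement, note that any tail $\{B_t\}_{t \geq t_0}$ is still increasing and has the same union, by the density step above. The same argument therefore applies verbatim. Alternatively, Attouch--Wets convergence is defined through conditions holding for $i$ large, and these are inherited by tails automatically.
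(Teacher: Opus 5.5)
Your proposal is correct and follows essentially the same route as the paper. Both arguments use the monotonicity of $\{B_t\}$ from \Cref{properties of NO regions}(3), the density of $\bigcup_t B_t$ in $\Gamma(\mathfrak{b}_{\bullet})$, the inclusion $B_t \subseteq \Gamma(\mathfrak{b}_{\bullet})$ for the second Attouch--Wets condition, and a finite $\epsilon/2$-subcover of the compact set $\overline{\Gamma(\mathfrak{b}_{\bullet}) \cap B}$ together with the triangle inequality for the first.
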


\begin{proof}

We will show that $\Gamma(\mathfrak{b}_{\bullet}) = AW_d - \lim B_t$ via the equivalent definition of \emph{Attouch–Wets convergence} (see \Cref{definition: Attouch-Wets Topology}). So each $B \in \mathscr{B}_d$, we need to show that both $\lim_{t} e_d(\Gamma(\mathfrak{b}_{\bullet}) \cap B, B_t) = 0$ and $\lim_{t} e_d(B_t \cap B, \Gamma(\mathfrak{b}_{\bullet})) = 0$, the latter is trivial since $B_t \subseteq \Gamma(\mathfrak{b}_{\bullet})$ for all $t$.

Observe that $\{ B_t \}_{t \in \mathbb{Z}_{\geq 1}}$ converges to $B' = \cup_{t \in \mathbb{Z}_{\geq 1}} B_t$ set-theoretically, and $\overline{B'} = \Gamma(\mathfrak{b}_{\bullet})$. Indeed, following Lemma \ref{lemma: finite properties of newton okounkov region}, take any $\frac{1}{t} \text{VR}(\mathfrak{b}_{t}) \subseteq \Gamma(\mathfrak{b}_{\bullet})$, we will have $\frac{1}{t} \text{VR}(\mathfrak{b}_{t}) \subseteq \frac{1}{t!} \text{VR}(\mathfrak{b}_{t!})$, and  $\{ B_t \}_{t \in \mathbb{Z}_{\geq 1}}$ is a non-decreasing sequence.

since $\Gamma(\mathfrak{b}_{\bullet}) = \overline{\cup_{t \geq 1} B_t}$, then for any point $x \in \Gamma(\mathfrak{b}_{\bullet})$, we can always find a sequence $\{ x_t \}_{t \geq 1}$ where $x_t \in B_t$ and converges to $x$. Because distance from a point to $B$ is same as to the closure $\overline{B}$, i.e. $e_d(\Gamma(\mathfrak{b}_{\bullet}) \cap B, B_t) = e_d(\Gamma(\mathfrak{b}_{\bullet}) \cap \overline{B}, B_t)$, we can just consider $\overline{B}$. Given $\epsilon > 0$, for any $x \in \Gamma(\mathfrak{b}_{\bullet}) \cap \overline{B}$, there exists corresponding $N_x \in \mathbb{Z}$ such that $t \geq N_x$ and $y_x \in B_t$, the distance $d(x,y_x) < \frac{\epsilon}{2}$. Consider an open cover $\{ B(x,\frac{\epsilon}{2}) \}_{x \in \Gamma(\mathfrak{b}_{\bullet}) \cap B}$ of $\Gamma(\mathfrak{b}_{\bullet}) \cap \overline{B}$ where $B(x,\frac{\epsilon}{2})$ is an open ball centered at $x$ with radius $\frac{\epsilon}{2}$. since $\Gamma(\mathfrak{b}_{\bullet}) \cap \overline{B}$ is compact, then there exists a finite subcover, relabel them as $B(x_{i_1},\frac{\epsilon}{2}), B(x_{i_2},\frac{\epsilon}{2}), ... \, , B(x_{i_N},\frac{\epsilon}{2})$. Then for all $t \geq \sup \{ N_{x_{i_j}} : 1 \leq j \leq N \}$, we have $x \in B(x_{i_j},\frac{\epsilon}{2})$ for some $j$, and $B_t$ contains  $y_{x_{i_j}}$ for all $j$. Therefore $d(x,y_{x_{i_j}}) \leq d(x, x_{i_j}) + d(x_{i_j}, y_{x_{i_j}}) < \epsilon$, this implies $e_d(\Gamma(\mathfrak{b}_{\bullet}) \cap B, B_t) < \epsilon$,  so for any given $B \in \mathscr{B}_d$, $\lim_{t} e_d(\Gamma(\mathfrak{b}_{\bullet}) \cap B, B_t) = 0$, as we desire.

\end{proof}

\vspace{2mm}

\section{Asymptotic resurgence number of a pair of graded families of ideals}\label{section 3}

\vspace{2mm}

In this section we show main results that share the same goal: the \emph{asymptotic resurgence number} of a pair of families of ideals equals to the supremum of noncontainment of the corresponding pair of \emph{Newton-Okounkov regions}. 

We derive properties of \emph{valued regions} and \emph{Newton-Okounkov regions} in the previous section, which are \Cref{properties of NO regions} and \ref{prop: NP Attouch-Wets converges to NR}, they are the main tools for \Cref{theorem: resurgence number = lambda}. However, instead of focusing the general case, we want $R$ to be Noetherian, then \emph{valued regions} are just \emph{valued polyhedra}. We also show \Cref{lemma: distance of two regions}, leading to a cleaner and slightly restricted version of \Cref{theorem: resurgence number = lambda}, which is the \Cref{thm: res num = inf NO}. Moreover, we show \Cref{main tool for Noetherian case}, and this allows us to prove \Cref{res = region in Notherian case} that generalize \cite[Theorem 2.7]{rncb}.

We have seen that the \emph{Newton-Okounkov regions} and \emph{valued regions} only establish the invariant to initial ideals, not the ideals in general. To study the \emph{containment problem} of families of ideals, we need the following definition, then we can pass the \emph{containment} of a pair of \emph{graded families of initial ideals} to the \emph{containment} of a pair of \emph{graded families of ideals}.

\begin{definition}\label{compatibility}

Given a pair of families of ideals $(\mathfrak{a}_{\bullet}, \mathfrak{b}_{\bullet})$ of $R$, we say that $(\mathfrak{a}_{\bullet}, \mathfrak{b}_{\bullet})$ satisfies \emph{compatibility condition} associated to a valuation $\nu$, denoted as \emph{$(\text{Compat}_{\nu})$}, if for all positive integers $s,r$, we have
\begin{align*}
    \mathfrak{a}_{st} \not\subseteq \mathfrak{b}_{rt} \Rightarrow in_{\nu}(\mathfrak{a}_{st}) \not\subseteq in_{\nu}(\mathfrak{a}_{st}) \; \text{for all} \; t \gg 0
\end{align*}

As a result, if $(\mathfrak{a}_{\bullet}, \mathfrak{b}_{\bullet})$ satisfies \emph{$(\text{Compat}_{\nu})$}, we have
\begin{align*}
    \mathfrak{a}_{st} \not\subseteq \mathfrak{b}_{rt} \iff in_{\nu}(\mathfrak{a}_{st}) \not\subseteq in_{\nu}(\mathfrak{a}_{st}) \; \text{for all} \; t \gg 0
\end{align*}

\end{definition}

\begin{remark}\label{homogenous ideal and initial ideal}

If we consider $R$ as the graded algebra $gr_{\nu}(R)$, with the valuation $\nu$ with \emph{one-dimensional leaves}, then the homogeneous ideals of $gr_{\nu}(R)$ are the same as their \emph{initial ideals} since different homogeneous components correspond to different degree up to scalars. Therefore, if given a pair of graded families of \emph{homogeneous ideals} $(\mathfrak{a}_{\bullet}, \mathfrak{b}_{\bullet})$ of $R$, then the \emph{asymptotic resurgence} $\hat{\rho}(in_{\nu}(\mathfrak{a}_{\bullet}), in_{\nu}(\mathfrak{b}_{\bullet}))$ can be replaced by $\hat{\rho}(\mathfrak{a}_{\bullet}, \mathfrak{b}_{\bullet})$. 

\begin{example}\label{symbolic power}
Following \Cref{homogenous ideal and initial ideal}, the monomial ideals of polynomial ring over a field with \emph{Gröbner valuation} $\nu$ satisfy $(Compat_{\nu})$, see \Cref{Groebner valuation} for more details. To have the graded families, we can take the set of \emph{ordinary powers} of monomial ideals as our family, and we also can take \emph{symbolic powers}, since \emph{symbolic powers} of an ideal are still monomial ideals.
\end{example}

\end{remark}

\begin{example}\label{determinantal ideal}
\cite[Section 6.5]{gbca} gives the classic definition of \emph{determinantal ideals}. Given a ring $\kk[X]$, where the \emph{matrix of indeterminantes} is $X = (x_{i,j})_{i \in [n], j \in [m]}$, and the \emph{determinantal ideal} of $X$ is $I_t(X) = \, <\text{determinats of t-minors of} \, X>$. Now we want to extend $X$ to be the following:
\begin{align*}
X &=
\begin{pmatrix}
x_{1,1} & x_{1,2} & ... \, \\
x_{2,1} & x_{2,2} & ... \, \\
\vdots & \vdots   \\
x_{n,1} & x_{n,2} & ... \, \\
\end{pmatrix}
\end{align*}
Define the \emph{matrix of indeterminates} to be $A = (x_{i,j})$ for some $i$ and $j$, where size of $A$ is bounded. For an example, let $n = 5$, we can let $A$ to be
\begin{align*}
\begin{pmatrix}
x_{1,1} & x_{1,4} & x_{1,9} \\
x_{2,1} & x_{2,4} & x_{1,9} \\
x_{4,1} & x_{4,4} & x_{4,9}  \\
\end{pmatrix}
\end{align*}
Define a \emph{valuation} $\nu$ on $\kk[X]$ by $\nu(x_{i,j}) = g_{i,j} \in G$ where $g_{i,j}$ is a generator of $G$. Note that different generators of a determinantal ideal correspond to different $t$-minors, and different $t$-minors can be described by the \emph{diagonals}. Thus, fixing a \emph{diagonal monomial order} with respect to $\nu$, it will construct a one-to-one correspondence between the \emph{determinantal ideal} and its \emph{initial ideal}, see \cite[Section 6.5.2]{gbca} for more details. Given a pair of \emph{families of determinantal ideals $(\mathfrak{a}_{\bullet}, \mathfrak{b}_{\bullet})$} of $\kk[X]$, it satisfies $(Compat_{\nu})$ because of the one-to-one correspondence. In particular, when $X$ is finite, we can define the \emph{valuation} to $\ZZ^n$.
\end{example}

To state the theorems in the most general settings, we only shows for \emph{asymptotic resurgence number} of a pair of \emph{families of initial ideals}. However, \Cref{homogenous ideal and initial ideal}, \Cref{symbolic power}, and \Cref{determinantal ideal} carry the statements to all \emph{families of ideals} as we eventually want.

\begin{theorem}\label{theorem: resurgence number = lambda}
Let $R$ be a Noetherian $\kk$-domain, and define a valuation $\nu$ with one-dimensional leaves on $R$, and $S(R) = \ZZ^n_{\geq 0}$. Given a pair of graded families of ideals $(\mathfrak{a}_{\bullet}, \mathfrak{b}_{\bullet})$ of $R$, and the asymptotic resurgence number $\hat{\rho}(in_{\nu}(\mathfrak{a}_{\bullet}), in_{\nu}(\mathfrak{b}_{\bullet}))$ exists, then
\begin{align*}
\hat{\rho}(in_{\nu}(\mathfrak{a}_{\bullet}), in_{\nu}(\mathfrak{b}_{\bullet})) & \geq \inf \{ \lambda > 0 ~|~ \lambda \cdot \Gamma(\mathfrak{a}_{\bullet}) \subseteq \Gamma(\mathfrak{b}_{\bullet}) \} \\ 
& = \sup \{ \lambda > 0 ~|~ \lambda \cdot \Gamma(\mathfrak{a}_{\bullet}) \not\subseteq \Gamma(\mathfrak{b}_{\bullet}) \}
\end{align*}

If for any $\lambda > \inf \{ \lambda > 0 ~|~ \lambda \cdot \Gamma(\mathfrak{a}_{\bullet}) \subseteq \Gamma(\mathfrak{b}_{\bullet}) \}$, we have  $\inf \{ d(x_1, x_2) ~|~ x_1 \in \lambda \Gamma(\mathfrak{a}_{\bullet}), x_2 \in \partial \Gamma(\mathfrak{b}_{\bullet}) \} > 0$ where $\partial \Gamma(\mathfrak{b}_{\bullet})$ is the boundary of $\Gamma(\mathfrak{b}_{\bullet})$. Then we have the following equality:
\begin{align*}
\hat{\rho}(in_{\nu}(\mathfrak{a}_{\bullet}), in_{\nu}(\mathfrak{b}_{\bullet})) &= \inf \{ \lambda > 0 ~|~ \lambda \cdot \Gamma(\mathfrak{a}_{\bullet}) \subseteq \Gamma(\mathfrak{b}_{\bullet}) \} \\
&= \sup \{ \lambda > 0 ~|~ \lambda \cdot \Gamma(\mathfrak{a}_{\bullet}) \not\subseteq \Gamma(\mathfrak{b}_{\bullet}) \}
\end{align*}

\end{theorem}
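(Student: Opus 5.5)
Write $\lambda_0 = \inf \{ \lambda > 0 ~|~ \lambda \cdot \Gamma(\mathfrak{a}_{\bullet}) \subseteq \Gamma(\mathfrak{b}_{\bullet}) \}$ and $\hat{\rho} = \hat{\rho}(in_{\nu}(\mathfrak{a}_{\bullet}), in_{\nu}(\mathfrak{b}_{\bullet}))$. I would handle three things in turn: the equality of the infimum and the supremum, the inequality $\hat{\rho} \geq \lambda_0$, and the reverse inequality under the distance hypothesis. Throughout I would use $\text{VR}(\mathfrak{a}) = \text{VR}(in_{\nu}(\mathfrak{a}))$, and \Cref{properties of NO regions}(2) to convert containment of initial ideals into containment of valued polyhedra.

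\textbf{The equality of infimum and supremum.} By \Cref{properties of NO regions}(5) and the hypothesis $S(R)=\ZZ^n_{\geq 0}$, both regions are closed, convex and stable under adding $\RR^n_{\geq 0}$. Since $\Gamma(\mathfrak{a}_{\bullet}) \subseteq \RR^n_{\geq 0}$, for $\mu \geq \lambda$ we have $\mu \Gamma(\mathfrak{a}_{\bullet}) \subseteq \lambda\Gamma(\mathfrak{a}_{\bullet}) + \RR^n_{\geq 0}$. Hence the set $\{\lambda : \lambda\Gamma(\mathfrak{a}_{\bullet}) \subseteq \Gamma(\mathfrak{b}_{\bullet})\}$ is an up-ray. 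Its complement in $(0,\infty)$ is therefore a down-ray, and the infimum of the first equals the supremum of the second.

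\textbf{The inequality $\hat{\rho} \geq \lambda_0$.} Take rational $s/r < \lambda_0$ and a rational $s/r < \lambda < \lambda_0$. Since the sets are up-rays, $\lambda\Gamma(\mathfrak{a}_{\bullet}) \not\subseteq \Gamma(\mathfrak{b}_{\bullet})$, so pick $x$ in the difference. Because $\Gamma(\mathfrak{b}_{\bullet})$ is closed, some ball around $x$ avoids it. Replacing $x$ by a nearby point of $\lambda\cdot\frac{1}{k}\text{VR}(\mathfrak{a}_k)$, which is allowed because $\Gamma(\mathfrak{a}_{\bullet})$ is the closure of the union, I may take $x = \lambda\, \nu(f)/k$ with $f \in \mathfrak{a}_k$, at positive distance from $\Gamma(\mathfrak{b}_{\bullet})$. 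Next I would use \Cref{lemma: finite properties of newton okounkov region} and \Cref{properties of NO regions}(3) along multiples $k = st$, so that $\frac{1}{st}\nu(\mathfrak{a}_{st})$ contains points near $\frac{1}{k}\nu(f)$. Scaling by $s/r$ instead of $\lambda$ moves the point in the $-\RR^n_{\geq 0}$ direction, so it still lies outside $\Gamma(\mathfrak{b}_{\bullet})$. Thus $\frac{1}{rt}\nu(g) \notin \Gamma(\mathfrak{b}_{\bullet}) \supseteq \frac{1}{rt}\text{VR}(\mathfrak{b}_{rt})$ for some $g \in \mathfrak{a}_{st}$, for all large $t$. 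By \Cref{properties of NO regions}(2) this gives $in_{\nu}(\mathfrak{a}_{st}) \not\subseteq in_{\nu}(\mathfrak{b}_{rt})$ for $t \gg 0$. Hence $\hat{\rho} \geq s/r$, and letting $s/r \to \lambda_0$ gives $\hat{\rho} \geq \lambda_0$.

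\textbf{The reverse inequality (main obstacle).} Fix $s/r > \lambda > \lambda_0$. I must show $in_{\nu}(\mathfrak{a}_{st}) \subseteq in_{\nu}(\mathfrak{b}_{rt})$ for $t \gg 0$, i.e. $\frac{s}{r}\cdot\frac{1}{st}\text{VR}(\mathfrak{a}_{st}) \subseteq \frac{1}{rt}\text{VR}(\mathfrak{b}_{rt})$. The left side lies in $\frac{s}{r}\Gamma(\mathfrak{a}_{\bullet})$, which by hypothesis has distance $\delta>0$ from $\partial\Gamma(\mathfrak{b}_{\bullet})$. Being inside $\Gamma(\mathfrak{b}_{\bullet})$, it therefore lies in the inner parallel set $\{y : B(y,\delta) \subseteq \Gamma(\mathfrak{b}_{\bullet})\}$. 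By \Cref{prop: NP Attouch-Wets converges to NR}, $\frac{1}{t!}\text{VR}(\mathfrak{b}_{t!})$ converges to $\Gamma(\mathfrak{b}_{\bullet})$ in the Attouch--Wets sense. Combined with monotonicity along multiples (\Cref{lemma: finite properties of newton okounkov region}), this should give: on each bounded ball, $\frac{1}{rt}\text{VR}(\mathfrak{b}_{rt})$ contains the $\delta$-inner set of $\Gamma(\mathfrak{b}_{\bullet})$ for $t$ large. The convexity argument is that if a convex set $C$ is $\epsilon$-dense in a convex $D$, then $C$ contains the points of $D$ at depth greater than $\epsilon$.

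The difficulty is the unbounded part, since Attouch--Wets convergence only controls bounded balls. Here I would use that both valued polyhedra have the form $\text{conv}(V)+\RR^n_{\geq 0}$ (\Cref{properties of NO regions}(5)). Choose one ball $B$ large enough that every point of $\frac{s}{r}\Gamma(\mathfrak{a}_{\bullet})$ outside $B$ is dominated coordinatewise by a point inside $B$ at depth at least $\delta$; the recession cone $\RR^n_{\geq 0}$ then carries containment to all of space. Justifying the existence of such a $B$ is the delicate step. For it I would use the boundary asymptotics from \Cref{properties of NO regions}(5) together with the positive-distance hypothesis.

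Finally, since $\hat{\rho}$ exists and non-containment fails for every $s/r > \lambda_0$, we get $\hat{\rho} \leq \lambda_0$, which gives the equality.
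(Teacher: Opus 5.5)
Your treatment of $\inf=\sup$ and of $\hat\rho\ge\lambda_0$ is sound. The up-ray argument in fact supplies the inequality $\sup\le\inf$, which the paper's two-line argument does not really establish. For the direct construction, replace $(s,r)$ by $(ks,kr)$ so that $f^{st}\in\mathfrak a_{kst}$; this gives non-containment for all $t$ using multiples only.

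The reverse inequality, however, has a gap beyond the one you flag. The reduction ``$in_\nu(\mathfrak a_{st})\subseteq in_\nu(\mathfrak b_{rt})$, i.e.\ $\frac{s}{r}\cdot\frac{1}{st}\text{VR}(\mathfrak a_{st})\subseteq\frac{1}{rt}\text{VR}(\mathfrak b_{rt})$'' holds only from left to right. $\text{VR}$ is a convex hull, so $\nu(g)\in\text{VR}(\mathfrak b_{rt})$ does not put $\nu(g)$ in $\nu(\mathfrak b_{rt}\setminus\{0\})$. For example, in $\kk[x,y]$ with a monomial order, $\text{VR}((xy))\subseteq\text{VR}((x^2,y^2))$ although $xy\notin(x^2,y^2)$. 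The paper's proof makes the same move via \Cref{properties of NO regions}(2), whose ``if'' direction fails on exactly this example, so you cannot borrow it.

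The missing idea is to spend part of the slack $\delta$ on integrality. Your $\epsilon$-density argument gives one level $N$ such that every point of $\Gamma(\mathfrak b_\bullet)\cap[0,M]^n$ of depth $\ge\delta/2$ has depth $\ge\delta/4$ in $\frac1N\text{VR}(\mathfrak b_N)$. Then, for $N\mid rt$ and $q=rt/N$, the relevant lattice point $rt\,z$ (with $z$ as in the next paragraph) has depth $\ge\delta rt/4$ in $q\,\text{VR}(\mathfrak b_N)$. Write it as $\sum c_iv_i+w$ with $\sum c_i=q$ and $w\ge0$, where $v_1,\dots,v_m$ are the finitely many (Dickson) minimal elements of $\nu(\mathfrak b_N\setminus\{0\})$. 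Rounding the $c_i$ to nonnegative integers with the same sum costs at most $m\max_i\|v_i\|_\infty$ per coordinate. This bound is independent of $t$, so the depth absorbs it for $t$ large. That yields $rt\,z\in\nu(\mathfrak b_N^{\,q})+\ZZ^n_{\ge0}\subseteq\nu(\mathfrak b_{rt}\setminus\{0\})$, and one-dimensional leaves then give $in_\nu(g)\in in_\nu(\mathfrak b_{rt})$.

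Second, the existence of your ball $B$ is precisely the step that turns local Attouch--Wets information into global containment, and you leave it open. As written, the reverse inequality is therefore not proved. The paper argues this point differently. It asserts that the unbounded part of $\partial B_t$ approaches $\partial\Gamma(\mathfrak b_\bullet)$ ``parallelly'', extracts a single $t_0$ with $\frac{1}{t_0!}\text{VP}(\mathfrak b_{t_0!})\supseteq\lambda\Gamma(\mathfrak a_\bullet)$, and propagates this along $t\in t_0!\,\ZZ_{\ge1}$.

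Your domination idea is the cleaner route, and it can be completed. Take $z=\min(y,M\mathbf 1)$ coordinatewise, with one integer $M$ for all $y$ of depth $\ge\delta$. Write $\Gamma(\mathfrak b_\bullet)=\{x:\langle c,x\rangle\ge\beta(c)\ \forall c\in\RR^n_{\ge0}\}$ with $\beta(c)=\inf_{x\in\Gamma(\mathfrak b_\bullet)}\langle c,x\rangle$. This $\beta$ is finite and concave on the polyhedral cone $\RR^n_{\ge0}$, hence continuous. Unit directions with $\sum_{y_j>M}c_j\ge\epsilon$ are handled by choosing $M\epsilon\ge\sup_{|c|=1}\beta(c)+\delta$. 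The remaining directions are handled by comparing $c$ with its restriction to $\{j:y_j\le M\}$ and using uniform continuity of $\beta$. Hence $z$ has depth $\ge\delta/2$, lies in $[0,M]^n$, and satisfies $\nu(g)\ge rt\,z$. Since $\nu(\mathfrak b_{rt}\setminus\{0\})$ is upward closed, it suffices to treat $rt\,z$ as above.

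Finally, Attouch--Wets convergence along $t!$ plus monotonicity along multiples gives containment only for $t$ in an arithmetic progression, not ``for $t\gg0$'' as you state. That is enough for $\hat\rho\le s/r$, and it is what the paper uses. You should either weaken the claim accordingly or repair it via $\mathfrak b_{qN}\mathfrak b_\rho\subseteq\mathfrak b_{qN+\rho}$.
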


\begin{proof}

Observe that $\sup \{ \lambda > 0 \, | \, \lambda \cdot \Gamma(\mathfrak{a}_{\bullet}) \not\subseteq \Gamma(\mathfrak{b}_{\bullet}) \} = \inf \{ \lambda > 0 \, | \, \lambda \cdot \Gamma(\mathfrak{a}_{\bullet}) \subseteq \Gamma(\mathfrak{b}_{\bullet}) \}$. Indeed, take any $\lambda = \frac{r}{s} > \sup \{ \lambda > 0 \, | \, \lambda \cdot \Gamma(\mathfrak{a}_{\bullet}) \not\subseteq \Gamma(\mathfrak{b}_{\bullet}) \}$, then $\lambda \Gamma(\mathfrak{a}_{\bullet}) \subseteq \Gamma(\mathfrak{b}_{\bullet})$, so $\sup \{ \lambda > 0 \, | \, \lambda \cdot \Gamma(\mathfrak{a}_{\bullet}) \not\subseteq \Gamma(\mathfrak{b}_{\bullet}) \} \leq \inf \{ \lambda > 0 \, | \, \lambda \cdot \Gamma(\mathfrak{a}_{\bullet}) \subseteq \Gamma(\mathfrak{b}_{\bullet}) \}$. Similarly, if we take any $\lambda = \frac{r}{s} < \inf \{ \lambda > 0 \, | \, \lambda \cdot \Gamma(\mathfrak{a}_{\bullet}) \subseteq \Gamma(\mathfrak{b}_{\bullet}) \}$, we will have $\sup \{ \lambda > 0 \, | \, \lambda \cdot \Gamma(\mathfrak{a}_{\bullet}) \not\subseteq \Gamma(\mathfrak{b}_{\bullet}) \} \geq \inf \{ \lambda > 0 \, | \, \lambda \cdot \Gamma(\mathfrak{a}_{\bullet}) \subseteq \Gamma(\mathfrak{b}_{\bullet}) \}$.

Note that $in_{\nu}(\mathfrak{a})$ contains $in_{\nu}(\mathfrak{b})$ if only if $\text{VP}(\mathfrak{a})$ contains $\text{VP}(\mathfrak{b})$ for any $\mathfrak{a},\, \mathfrak{b}$ ideals of $R$ by \Cref{properties of NO regions}(2). Let $\lambda = \frac{r}{s} > \hat{\rho}(in_{\nu}(\mathfrak{a}_{\bullet}),in_{\nu}( \mathfrak{b}_{\bullet}))$, and let $\alpha \in \text{VP}(\mathfrak{a}_{rt})$ for all $t \geq t_0$ if $t_0$ is large enough, then $\alpha \in \text{VP}(\mathfrak{b}_{st})$. Now we have
\begin{align*}
\frac{1}{rt} \lambda \alpha = \frac{1}{st} \alpha \in \frac{1}{st} \lambda \text{VP}(\mathfrak{b}_{st}) \subseteq \Gamma(\mathfrak{b}_{\bullet}) \; \text{and} \; \frac{1}{rt} \lambda \alpha \in \lambda \text{VP}(\mathfrak{a}_{rt}) \subseteq \lambda \Gamma(\mathfrak{a}_{\bullet})
\end{align*}

Since the above holds for all $t \geq t_0$, then by Lemma \ref{lemma: finite properties of newton okounkov region}, we can conclude that $\lambda \Gamma(\mathfrak{a}_{\bullet}) \subseteq \Gamma(\mathfrak{b}_{\bullet})$. It immediately follows that $\hat{\rho}(in_{\nu}(\mathfrak{a}_{\bullet}), in_{\nu}(\mathfrak{b}_{\bullet})) \geq \inf \{ \lambda > 0 \, | \, \lambda \cdot \Gamma(\mathfrak{a}_{\bullet}) \subseteq \Gamma(\mathfrak{b}_{\bullet}) \}$.

\vspace{3mm}

Let $\lambda = \frac{r}{s} > \inf \{ \lambda > 0 \, | \, \lambda \cdot \Gamma(\mathfrak{a}_{\bullet}) \subseteq \Gamma(\mathfrak{b}_{\bullet}) \}$, then define the two sequences of \emph{valued Polyhedra} as follows:
\begin{align*}
\{ A_t \}_{t \in \mathbb{Z}_{\geq 1}} \, \text{where} \, A_t = \frac{1}{rt} \text{VP}(\mathfrak{a}_{rt}) \, \text{and} \, \{ B_t \}_{t \in \mathbb{Z}_{\geq 1}} \, \text{where} \, B_t = \frac{1}{t!} \text{VP}(\mathfrak{b}_{t!})
\end{align*}

By Proposition \ref{prop: NP Attouch-Wets converges to NR}, $\Gamma(\mathfrak{b}_{\bullet}) = \overline{ \cup_{t \geq 1} B_t }$, and $\Gamma(\mathfrak{b}_{\bullet}) = AW_d - \lim B_t$. By \Cref{properties of NO regions}(5), fix $r$ to be the real number such that $\partial \Gamma(\mathfrak{b}_{\bullet})$ approaches or intersects $\partial (r+\RR^n_{\geq 0})$. Let $B_t = conv(V_t) + \RR^n_{\geq 0}$ where $V_t$ is the set of \emph{extreme points} of $B_t$, then $\partial B_t - \partial conv(V_t)$ approaches to $\partial (r+\RR^n_{\geq 0})$ parallelly as $t$ goes to infinity, so does to $\partial \Gamma(\mathfrak{b}_{\bullet})$.

Following the assumption, we have  $\inf \{ d(x_1, x_2) ~|~ x_1 \in \lambda \Gamma(\mathfrak{a}_{\bullet}), x_2 \in \partial \Gamma(\mathfrak{b}_{\bullet}) \} > 0$, then there exists $\epsilon > 0$ such that $(\lambda \Gamma(\mathfrak{a}_{\bullet}))^{\epsilon}$ is contained in a closed subset, denoted by $H_1$. Moreover, $(\Gamma(\mathfrak{b}_{\bullet})^{c})^{\epsilon}$ is contained in another closed subset, denoted by $H_2$, and we want $H_1$ and $H_2$ never intersect. In particular, $H_2^c$ strictly contains $\lambda \Gamma(\mathfrak{a}_{\bullet})$ and is strictly contained in $\Gamma(\mathfrak{b}_{\bullet})$. 

Let $t_1$ be large enough such that distance from $\partial B_t - \partial conv(V_t)$ to $\partial \Gamma(\mathfrak{b}_{\bullet})$ is less than $\frac{\epsilon}{2}$ for all $t \geq t_1$ because of the conclusion from the paragraph before the previous one. Note that for any $B_t$ the $conv(V_t)$ is a bounded convex set, and $B_t \subseteq B_{t+1}$ for all $t$. Following the definition of \emph{Attouch-Wets convergence}, we can let $t_0$ be big enough, such that for all $t \geq t_0 \geq t_1$, $\Gamma(\mathfrak{b}_{\bullet}) \cap C_d \subseteq B_t^{\frac{\epsilon}{2}}$ and $B_t \cap C_d \subseteq \Gamma(\mathfrak{b}_{\bullet})^{\frac{\epsilon}{2}}$ for all $d \leq d_0$, where $C_d \in \mathscr{B}_d$ and $C_{d_0}$ is the first ball that fully contains $conv(V_{t_1})$. Then $B_t \supsetneq H_2^c \supsetneq \lambda \Gamma(\mathfrak{a}_{\bullet})$ for all $t \geq t_0$. Now let $B_{k(t_0!)}' = \frac{1}{sk(t_0!)} \text{VP}(\mathscr{b}_{sk(t_0!)}) \supseteq B_{t_0} \supsetneq \lambda \Gamma(\mathfrak{a}_{\bullet})$ for all $k \geq 1$ by  \Cref{lemma: finite properties of newton okounkov region}. Obviously, $B_{k(t_0!)}' \supseteq A_{k(t_0!)}$ for all $k \geq 1$, then for all $t = k (t_0!)$, we have 
\begin{align*}
\frac{1}{st} \text{VP}(\mathfrak{b}_{st}) \supseteq \frac{1}{rt} \lambda \text{VP}(\mathfrak{a}_{rt}) \Rightarrow \frac{1}{st} \text{VP}(\mathfrak{b}_{st}) \supseteq \frac{1}{st} \text{VP}(\mathfrak{a}_{rt}) \Rightarrow \text{VP}(\mathfrak{b}_{st}) \supseteq \text{VP}(\mathfrak{a}_{rt})
\end{align*}
Thus we have $\lambda > \hat{\rho}(in_{\nu}(\mathfrak{a}_{\bullet}), in_{\nu}(\mathfrak{b}_{\bullet}))$, so $\hat{\rho}(in_{\nu}(\mathfrak{a}_{\bullet}), in_{\nu}(\mathfrak{b}_{\bullet})) \leq \inf \{ \lambda > 0 \, | \, \lambda \cdot \Gamma(\mathfrak{a}_{\bullet}) \subseteq \Gamma(\mathfrak{b}_{\bullet}) \}$
\end{proof}

\begin{lemma}\label{lemma: distance of two regions}
Let $R$ be a Noetherian $\kk$-domain, and define a valuation $\nu$ with one-dimensional leaves on $R$, and $S(R) = \ZZ^n_{\geq 0}$. Let $\mathfrak{a}_{\bullet}$ and $\mathfrak{b}_{\bullet}$ be graded families of ideals of $R$. Suppose $\inf \{ d(x_1, x_2) ~|~ x_1 \in \Gamma(\mathfrak{b}_{\bullet}), x_2 \in \partial \mathbb{R}_{\geq 0}^n \} > 0$, then for any $\lambda > \inf \{ \lambda > 0 ~|~ \lambda \cdot \Gamma(\mathfrak{a}_{\bullet}) \subseteq \Gamma(\mathfrak{b}_{\bullet}) \}$, it satisfies that $\inf \{ d(x_1, x_2) ~|~ x_1 \in \lambda \Gamma(\mathfrak{a}_{\bullet}), x_2 \in \partial \Gamma(\mathfrak{b}_{\bullet}) \} > 0$
\end{lemma}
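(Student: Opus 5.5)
The idea is to use scaling. Write $\lambda_0 = \inf \{ \lambda > 0 ~|~ \lambda \cdot \Gamma(\mathfrak{a}_{\bullet}) \subseteq \Gamma(\mathfrak{b}_{\bullet}) \}$ and fix $\lambda > \lambda_0$. Choose $\lambda'$ with $\lambda_0 < \lambda' < \lambda$ and $\lambda' \Gamma(\mathfrak{a}_{\bullet}) \subseteq \Gamma(\mathfrak{b}_{\bullet})$; such a $\lambda'$ exists by the definition of the infimum. Then
\begin{align*}
\lambda \Gamma(\mathfrak{a}_{\bullet}) = \mu \cdot \lambda' \Gamma(\mathfrak{a}_{\bullet}) \subseteq \mu \Gamma(\mathfrak{b}_{\bullet}), \qquad \mu = \lambda/\lambda' > 1 .
\end{align*}
It therefore suffices to show that $\mu \Gamma(\mathfrak{b}_{\bullet})$ stays at positive distance from $\partial \Gamma(\mathfrak{b}_{\bullet})$. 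In other words, a dilation by a factor $\mu > 1$ pushes $\Gamma(\mathfrak{b}_{\bullet})$ uniformly into its own interior.

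First I record the structure of $\Gamma(\mathfrak{b}_{\bullet})$. It is a closed convex set. By \Cref{properties of NO regions}(5), each $\text{VP}(\mathfrak{b}_k)$ is invariant under adding $\RR^n_{\geq 0}$, and so is $\Gamma(\mathfrak{b}_{\bullet})$ (it is a closure of a union of such sets). The hypothesis says there is $\delta > 0$ with $\Gamma(\mathfrak{b}_{\bullet}) \subseteq \delta \1 + \RR^n_{\geq 0}$. Concretely, every coordinate of every point is at least $\delta$: a point $x$ with some $x_i < \delta$ would be within distance $x_i$ of the face $\{x_i = 0\}$ of $\partial \RR^n_{\geq 0}$.

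Next comes the key estimate. Take $y \in \Gamma(\mathfrak{b}_{\bullet})$ and put $z = \mu y$. I claim the ball around $z$ of radius $\eta = (\mu - 1)\delta$ lies in $\Gamma(\mathfrak{b}_{\bullet})$. Write $z = y + (\mu - 1) y$. Every coordinate of $(\mu - 1) y$ is at least $(\mu - 1)\delta = \eta$. Hence for any $w$ with $|w| < \eta$ (Euclidean norm, so each $|w_i| < \eta$), the vector $(\mu - 1) y + w$ lies in $\RR^n_{\geq 0}$. By recession invariance, $z + w = y + \big((\mu - 1) y + w\big) \in \Gamma(\mathfrak{b}_{\bullet})$.

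So every point of $\mu \Gamma(\mathfrak{b}_{\bullet})$, and in particular every $x_1 \in \lambda \Gamma(\mathfrak{a}_{\bullet})$, has an $\eta$-ball contained in $\Gamma(\mathfrak{b}_{\bullet})$. Any boundary point $x_2 \in \partial \Gamma(\mathfrak{b}_{\bullet})$ must then satisfy $d(x_1, x_2) \geq \eta$. This gives the uniform bound $\inf d(x_1, x_2) \geq (\mu - 1)\delta > 0$.

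The main obstacle is making sure the recession cone really is all of $\RR^n_{\geq 0}$ for the closure $\Gamma(\mathfrak{b}_{\bullet})$, and not just for each valued polyhedron. I would handle this by noting that adding a fixed vector $v \in \RR^n_{\geq 0}$ maps each $\frac{1}{k}\text{VP}(\mathfrak{b}_k)$ into itself, and translation commutes with closure. This is where $S(R) = \ZZ^n_{\geq 0}$ and Noetherianity enter, through \Cref{properties of NO regions}(5).

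A secondary point is the degenerate case where $\lambda_0 = 0$ or the infimum is attained; neither affects the choice of $\lambda'$. If the set $\{\lambda ~|~ \lambda \Gamma(\mathfrak{a}_{\bullet}) \subseteq \Gamma(\mathfrak{b}_{\bullet})\}$ is empty, the statement is vacuous.
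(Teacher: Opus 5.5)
Your proof is correct and uses the same mechanism as the paper: every coordinate of a point of $\Gamma(\mathfrak{b}_{\bullet})$ is at least $d$, so a ball around $\lambda x$ with radius proportional to $(\lambda-\lambda')d$ fits inside $\lambda' x+\RR^n_{\geq 0}\subseteq\Gamma(\mathfrak{b}_{\bullet})$. The paper frames this as a contradiction argument with radius $\frac{\lambda-\lambda'}{\lambda}d$, whereas you argue directly with the explicit uniform bound $(\lambda/\lambda'-1)\delta$, and you also justify the $\RR^n_{\geq 0}$-invariance of the closure, which the paper takes for granted.
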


\begin{proof}

We prove by contraction, suppose $\inf \{ d(x_1, x_2) ~|~ x_1 \in \lambda \Gamma(\mathfrak{a}_{\bullet}), x_2 \in \partial \Gamma(\mathfrak{b}_{\bullet}) \} = 0$, then there exist a number $\lambda_0 > \inf \{ \lambda > 0 ~|~ \lambda \cdot \Gamma(\mathfrak{a}_{\bullet}) \subseteq \Gamma(\mathfrak{b}_{\bullet}) \}$ satisfying the one of the following cases.
\begin{itemize}[leftmargin=15mm]
\item[Case 1:] $\lambda_0 \Gamma(\mathfrak{a}_{\bullet})$ intersects with boundary of $\Gamma(\mathfrak{b}_{\bullet})$ (i.e. $\partial \Gamma(\mathfrak{b}_{\bullet})$).

\item[Case 2:]$\lambda_0 \Gamma(\mathfrak{a}_{\bullet})$ approaches to the boundary of $\Gamma(\mathfrak{b}_{\bullet})$.
\end{itemize}

The second case is possible since $\Gamma(\mathfrak{a}_{\bullet})$ is an unbounded convex set. We show the second case, then the first case follows easily.

By the assumption, set $ d = \inf \{ d(x_1, x_2) ~|~ x_1 \in \Gamma(\mathfrak{b}_{\bullet}), x_2 \in \partial \mathbb{R}_{\geq 0}^n \} > 0$. Since $\lambda_0 \Gamma(\mathfrak{a}_{\bullet})$ approaches to the boundary of $\Gamma(\mathfrak{b}_{\bullet})$, we can find a point $x \in \Gamma(\mathfrak{a}_{\bullet})$ and a number $\lambda_1 < \lambda_0$ such that $d(\lambda_0 x, \partial \Gamma(\mathfrak{b}_{\bullet})) < \frac{\lambda_0 - \lambda_1}{\lambda_0} d$, and $\lambda_1 > \inf \{ \lambda > 0 ~|~ \lambda \cdot \Gamma(\mathfrak{a}_{\bullet}) \subseteq \Gamma(\mathfrak{b}_{\bullet}) \}$. Note that $\lambda_1 x + \mathbb{R}^n_{\geq 0} \subseteq \Gamma(\mathfrak{b}_{\bullet})$ by proof of \Cref{properties of NO regions}, then there exist an open ball centered at $\lambda_0 x$ with radius of $\frac{\lambda_0 - \lambda_1}{\lambda_0} d$ inside $\Gamma(\mathfrak{b}_{\bullet})$. Indeed, $d(\lambda_0 x , \partial \mathbb{R}_{\geq 0}^n) > d$  since $\lambda_0 x$ is contained in the interior of $\Gamma(\mathfrak{b}_{\bullet})$, then distance from $\lambda_0 x$ to the boundary of $\lambda_1 x + \mathbb{R}^n_{\geq 0}$ is greater than $\frac{\lambda_0 - \lambda_1}{\lambda_0} d$. However, it contradicts to the fact that $d(\lambda_0 x, \partial \Gamma(\mathfrak{b}_{\bullet})) < \frac{\lambda_0 - \lambda_1}{\lambda_0} d$.

For the first case, let $\lambda_0 x$ be the intersection point, then $d(\lambda_0 x, \partial \Gamma(\mathfrak{b}_{\bullet})) = 0$, similarly we can construct the contradiction.

\end{proof}

\begin{corollary}\label{thm: res num = inf NO}
Let $R$ be a Noetherian $\kk$-domain, and define a valuation $\nu$ with one-dimensional leaves on $R$, and $S(R) = \ZZ^n_{\geq 0}$. Let $\mathfrak{a}_{\bullet}$ and $\mathfrak{b}_{\bullet}$ be graded families of ideals of $R$. Suppose $\inf \{ d(x_1, x_2) ~|~ x_1 \in \Gamma(\mathfrak{b}_{\bullet}), x_2 \in \partial \mathbb{R}_{\geq 0}^n \} > 0$, then we have the following equality:
\begin{align*}
\hat{\rho}(in_{\nu}(\mathfrak{a}_{\bullet}), in_{\nu}(\mathfrak{b}_{\bullet})) &= \inf \{ \lambda > 0 ~|~ \lambda \cdot \Gamma(\mathfrak{a}_{\bullet}) \subseteq \Gamma(\mathfrak{b}_{\bullet}) \} \\
&= \sup \{ \lambda > 0 ~|~ \lambda \cdot \Gamma(\mathfrak{a}_{\bullet}) \not\subseteq \Gamma(\mathfrak{b}_{\bullet}) \}
\end{align*}

\end{corollary}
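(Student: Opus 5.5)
The plan is to obtain the corollary by combining \Cref{lemma: distance of two regions} with the second part of \Cref{theorem: resurgence number = lambda}. The only gap is that \Cref{theorem: resurgence number = lambda} assumes that $\hat{\rho}(in_{\nu}(\mathfrak{a}_{\bullet}), in_{\nu}(\mathfrak{b}_{\bullet}))$ exists, while the corollary does not. So I will first check that this hypothesis is either automatic or not needed.

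\textbf{Step 1: the distance hypothesis.} Write $\lambda^* = \inf \{ \lambda > 0 ~|~ \lambda \cdot \Gamma(\mathfrak{a}_{\bullet}) \subseteq \Gamma(\mathfrak{b}_{\bullet}) \}$. By the first paragraph of the proof of \Cref{theorem: resurgence number = lambda}, $\lambda^*$ also equals the supremum over the noncontainment set, so the second displayed equality in the corollary holds. Since we assume $\inf \{ d(x_1,x_2) ~|~ x_1 \in \Gamma(\mathfrak{b}_{\bullet}), x_2 \in \partial \RR^n_{\geq 0}\} > 0$, \Cref{lemma: distance of two regions} gives, for every $\lambda > \lambda^*$,
\[
\inf \{ d(x_1, x_2) ~|~ x_1 \in \lambda \Gamma(\mathfrak{a}_{\bullet}), x_2 \in \partial \Gamma(\mathfrak{b}_{\bullet}) \} > 0 .
\]
This is exactly the extra hypothesis in the second half of \Cref{theorem: resurgence number = lambda}.

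\textbf{Step 2: the upper bound without assuming existence.} I will rerun the second half of the proof of \Cref{theorem: resurgence number = lambda}. That argument only uses the Attouch--Wets approximation $B_t \to \Gamma(\mathfrak{b}_{\bullet})$ from \Cref{prop: NP Attouch-Wets converges to NR} and the positive-distance condition. Fix a rational $\lambda = r/s > \lambda^*$. It produces $t_0$ such that $\mathrm{VP}(\mathfrak{a}_{rt}) \subseteq \mathrm{VP}(\mathfrak{b}_{st})$ for all $t$ divisible by $t_0!$. By \Cref{properties of NO regions}(2), this gives the containment of initial ideals $in_{\nu}(\mathfrak{a}_{rt}) \subseteq in_{\nu}(\mathfrak{b}_{st})$ for all such $t$. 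Hence every ratio with asymptotic noncontainment is at most $\lambda$. Letting $\lambda \downarrow \lambda^*$ through rationals shows that the supremum defining $\hat{\rho}$ is finite and at most $\lambda^*$.

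\textbf{Step 3: the lower bound.} The first half of the proof of \Cref{theorem: resurgence number = lambda} gives $\hat\rho \geq \lambda^*$. It argues the contrapositive: for rational $\lambda > \hat{\rho}$ the initial ideals are eventually contained, and by \Cref{lemma: finite properties of newton okounkov region} this forces $\lambda\Gamma(\mathfrak{a}_{\bullet}) \subseteq \Gamma(\mathfrak{b}_{\bullet})$. This direction also never uses existence in any essential way: once $\hat\rho$ is finite by Step 2, rationals above it are available. Combining Steps 2 and 3 with Step 1 gives the stated equalities.

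\textbf{Main obstacle.} The delicate point is the existence and finiteness issue, together with the ``$t \gg 0$'' quantifier. In Step 2 the containment is only obtained along multiples of $t_0!$, not for all large $t$. I would handle this by reading the definition of $\hat{\rho}$ so that eventual containment along a cofinal set of $t$ suffices, as the paper's proof implicitly does. If needed, I would instead pass to the tail sequences allowed in \Cref{prop: NP Attouch-Wets converges to NR} and \Cref{lemma: finite properties of newton okounkov region}.
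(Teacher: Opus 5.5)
Your outline is the paper's own proof of this corollary. \Cref{lemma: distance of two regions} feeds the second half of the proof of \Cref{theorem: resurgence number = lambda}, which gives $\hat\rho\le\lambda^*$ and hence existence, and the first half gives $\hat\rho\ge\lambda^*$. But the black boxes you invoke do not hold as written, and the paper has the same defects, so citing them does not close the gaps.

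\textbf{The decisive gap is in Step~2.} There you pass from $\mathrm{VP}(\mathfrak a_{rt})\subseteq\mathrm{VP}(\mathfrak b_{st})$ to $in_\nu(\mathfrak a_{rt})\subseteq in_\nu(\mathfrak b_{st})$ via \Cref{properties of NO regions}(2). That implication is false, because a valued polyhedron only sees integral closures. For the Gr\"obner valuation on $\kk[x,y]$, the ideals $I=(x^2,y^2)$ and $J=(x,y)^2$ have $\mathrm{VP}(J)=\mathrm{VP}(I)$ but $J\not\subseteq I$. What is missing is slack plus Brian\c{c}on--Skoda.
\begin{itemize}
\item Since $S(R)=\ZZ^n_{\ge 0}$ and the leaves are one-dimensional, $gr_\nu(R)\cong\kk[x_1,\dots,x_n]$. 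So the initial ideals are monomial ideals whose Newton polyhedra are the $\mathrm{VP}$'s.
\item Given $\lambda=r/s>\lambda^*$, choose $\lambda^*<\lambda'<\lambda$ and write $\lambda/\lambda'=(1+\eta)c'$ with $\eta>0$ and $c'>1$.
\item Every point of $\Gamma(\mathfrak b_\bullet)$ has all coordinates $\ge d$. Hence $c'\Gamma(\mathfrak b_\bullet)$ lies in the set $\Gamma_\varepsilon$ of points whose $\varepsilon$-ball is contained in $\Gamma(\mathfrak b_\bullet)$, where $\varepsilon=(c'-1)d$.
\item Suppose $\Gamma_\varepsilon\subseteq B_K=\frac1K\mathrm{VP}(\mathfrak b_K)$ and $st=mK$. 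Then $\mathrm{VP}(\mathfrak a_{rt})\subseteq st\,\lambda\Gamma(\mathfrak a_\bullet)\subseteq st(1+\eta)B_K=m(1+\eta)\mathrm{VP}(\mathfrak b_K)$, and this lies in $(m+n-1)\mathrm{VP}(\mathfrak b_K)$ once $m\eta\ge n-1$.
\item Monomial Brian\c{c}on--Skoda then gives $in_\nu(\mathfrak a_{rt})\subseteq\overline{in_\nu(\mathfrak b_K)^{m+n-1}}\subseteq in_\nu(\mathfrak b_K)^m\subseteq in_\nu(\mathfrak b_{st})$.
\end{itemize}

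\textbf{The Attouch--Wets step also needs a real argument.} Your claim that the argument ``only uses the Attouch--Wets approximation'' does not suffice. That convergence controls $B_t$ only on bounded balls, while $\lambda\Gamma(\mathfrak a_\bullet)$ is unbounded, so it does not give $\Gamma_\varepsilon\subseteq B_K$. The paper's sentence that $\partial B_t-\partial\,conv(V_t)$ approaches the boundary ``parallelly'' is not a proof of this. One way to get it:
\begin{itemize}
\item On the simplex $\Delta$ of nonnegative directions, set $h_C(a)=\inf_{y\in C}\langle a,y\rangle$.
\item The functions $h_{B_t}$ decrease pointwise to $h_{\Gamma(\mathfrak b_\bullet)}$. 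All are continuous on $\Delta$: upper semicontinuous as infima of linear forms, and lower semicontinuous by Gale--Klee--Rockafellar as finite concave functions on a polytope.
\item By Dini's theorem the convergence is uniform.
\item If $y\in\Gamma_\varepsilon\setminus B_t$, a separating direction lies in $\Delta$ and forces $h_{B_t}-h_{\Gamma(\mathfrak b_\bullet)}\ge\varepsilon/\sqrt n$ there. So $\Gamma_\varepsilon\subseteq B_t$ for $t\gg0$.
\end{itemize}

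\textbf{On quantifiers, the obstacle you flag is not one.} A ratio enters the set defining $\hat\rho$ only if noncontainment holds for all $t\gg0$. So containment for all large $t\in K\ZZ$ already excludes it, and no reinterpretation of the definition is needed.

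The quantifier matters in Step~3 instead. There $\lambda>\hat\rho$ gives containment only for infinitely many $t$, not eventually as you and the paper assume. Argue directly instead:
\begin{itemize}
\item If $\mu=r/s<\lambda^*$, some $f\in\mathfrak a_k$ has $\mu\,\nu(f)/k\notin\Gamma(\mathfrak b_\bullet)$.
\item Write $rt=pk+j$ with fixed nonzero $g_j\in\mathfrak a_j$. The points $\nu(f^pg_j)/(st)$ converge to $\mu\,\nu(f)/k$.
\item Hence $in_\nu(\mathfrak a_{rt})\not\subseteq in_\nu(\mathfrak b_{st})$ for all $t\gg0$.
\end{itemize}
With these repairs your outline goes through.
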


\begin{proof}
Combining \Cref{lemma: distance of two regions} and the proof of the second statement of \Cref{theorem: resurgence number = lambda}, we have $\hat{\rho}(in_{\nu}(\mathfrak{a}_{\bullet}), in_{\nu}(\mathfrak{b}_{\bullet})) \leq \inf \{ \lambda > 0 ~|~ \lambda \cdot \Gamma(\mathfrak{a}_{\bullet}) \subseteq \Gamma(\mathfrak{b}_{\bullet}) \}$, so asymptotic resurgence number $\hat{\rho}(in_{\nu}(\mathfrak{a}_{\bullet}), in_{\nu}(\mathfrak{b}_{\bullet}))$ always exists. Applying first statement of \Cref{theorem: resurgence number = lambda}, then we are done.

\end{proof}

\begin{remark}
Observe that for any pair of families of ideals $(\mathfrak{a}_{\bullet}, \mathfrak{b}_{\bullet})$, $\inf \{ \lambda > 0 ~|~ \lambda \cdot \Gamma(\mathfrak{a}_{\bullet}) \subseteq \Gamma(\mathfrak{b}_{\bullet}) \}$ (and $\sup \{ \lambda > 0 ~|~ \lambda \cdot \Gamma(\mathfrak{a}_{\bullet}) \not\subseteq \Gamma(\mathfrak{b}_{\bullet}) \}$) always exists, but $\hat{\rho}(in_{\nu}(\mathfrak{a}_{\bullet}), in_{\nu}(\mathfrak{b}_{\bullet}))$ may not exists. However, under assumptions of \Cref{thm: res num = inf NO}, $\hat{\rho}(in_{\nu}(\mathfrak{a}_{\bullet}), in_{\nu}(\mathfrak{b}_{\bullet}))$ always exists. 
\end{remark}

\begin{example}\label{eg no asy res num}
\Cref{thm: res num = inf NO} implies that the statement may fails if $\Gamma(\mathfrak{b}_{\bullet})$ approaches or intersects the boundary of $\mathbb{R}^n_{\geq 0}$. We can give a simple example when it fails. Given a polynomial over a field $R = \kk [x,y,z]$, define $\mathfrak{b}_n = (x^{\floor{\ln(n+2)}} y^n z^{2n})$ for the family of ideals $\mathfrak{b}_{\bullet}$, and define $\mathfrak{a}_n = (y^n z^{2n})$ for the family of ideals $\mathfrak{a}_{\bullet}$. It is not hard to see $\Gamma(\mathfrak{b}_{\bullet})=\Gamma(\mathfrak{a}_{\bullet})$, and $\Gamma(\mathfrak{b}_{\bullet})$ intersects the boundary of $\mathbb{R}^n_{\geq 0}$, but $\hat{\rho}(\mathfrak{a}_{\bullet}, \mathfrak{b}_{\bullet})$ does not exist.

\end{example}

Given a family of monomial ideals $\mathfrak{a}_{\bullet}$ in a polynomial ring over a field $R = \kk [x_1, ... \, , x_n]$, observe that $\inf \{ d(x_1, x_2) ~|~ x_1 \in \Gamma(\mathfrak{b}_{\bullet}), x_2 \in \partial \mathbb{R}_{\geq 0}^n \} > 0$ if only if there exists an $i \in \{1, ... \, , n\}$ such that $0 = inf \{\frac{u_i}{n} ~|~ (u_1, ... \, , u_n)$ is the degree of the monomial generator of $\mathfrak{a}_n \}$, so we have another version of \Cref{thm: res num = inf NO} that only states via algebraic conditions on families of monomial ideals but not via convex geometry:

\begin{corollary}\label{thm: res num = inf NO alg con}
Let $\mathfrak{a}_{\bullet}$ and $\mathfrak{b}_{\bullet}$ be graded families of monomial ideals polynomial ring over a field, and if there is no $i \in \{1, ... \, , n\}$ such that $0 = inf \{\frac{u_i}{n} ~|~ (u_1, ... \, , u_n)$ is the degree of the monomial generator of $\mathfrak{b}_n$, then we have the following equality:
\begin{align*}
\hat{\rho}(\mathfrak{a}_{\bullet}, \mathfrak{b}_{\bullet}) = \inf \{ \lambda > 0 ~|~ \lambda \cdot \Gamma(\mathfrak{a}_{\bullet}) \subseteq \Gamma(\mathfrak{b}_{\bullet}) \} = \sup \{ \lambda > 0 ~|~ \lambda \cdot \Gamma(\mathfrak{a}_{\bullet}) \not\subseteq \Gamma(\mathfrak{b}_{\bullet}) \}
\end{align*}

\end{corollary}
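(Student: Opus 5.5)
The plan is to deduce \Cref{thm: res num = inf NO alg con} from \Cref{thm: res num = inf NO}. The work is to check that its hypotheses hold for monomial families, and then to remove the initial ideals using \Cref{symbolic power}.

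First I fix the setting. Take $R=\kk[x_1,\dots,x_n]$ with the Gröbner valuation $\nu$ of \Cref{Groebner valuation}. Then $R$ is a Noetherian $\kk$-domain, $\nu$ has one-dimensional leaves, and $S(R)=\ZZ^n_{\geq 0}$. Every monomial ideal equals its own initial ideal, by \Cref{homogenous ideal and initial ideal} and \Cref{symbolic power}, so $in_\nu(\mathfrak{a}_\bullet)=\mathfrak{a}_\bullet$ and $in_\nu(\mathfrak{b}_\bullet)=\mathfrak{b}_\bullet$. Hence $\hat\rho(in_\nu(\mathfrak{a}_\bullet),in_\nu(\mathfrak{b}_\bullet))=\hat\rho(\mathfrak{a}_\bullet,\mathfrak{b}_\bullet)$. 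It therefore suffices to prove
\begin{align*}
\inf \{ d(x_1,x_2) ~|~ x_1\in\Gamma(\mathfrak{b}_\bullet),\ x_2\in\partial\RR^n_{\geq 0}\}>0 .
\end{align*}

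Next I translate the algebraic hypothesis. For each $i$ set $c_i=\inf\{u_i/k ~|~ x^u \text{ a monomial generator of } \mathfrak{b}_k,\ k\geq 1\}$. The hypothesis says $c_i>0$ for every $i$. Each $\text{VP}(\mathfrak{b}_k)$ is $\text{conv}$ of the generator exponents plus $\RR^n_{\geq 0}$, by \Cref{properties of NO regions}(5). So every point $y$ of $\frac1k\text{VP}(\mathfrak{b}_k)$ satisfies $y_i\geq c_i$ for all $i$: the inequality holds on the extreme points, is preserved by convex combinations, and is preserved by adding $\RR^n_{\geq 0}$. These linear inequalities are closed conditions, so they pass to the closure of the union. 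Hence $\Gamma(\mathfrak{b}_\bullet)\subseteq c+\RR^n_{\geq 0}$ with $c=(c_1,\dots,c_n)$.

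Finally I bound the distance. A point $x_2\in\partial\RR^n_{\geq 0}$ has some coordinate $(x_2)_j=0$. For $x_1\in\Gamma(\mathfrak{b}_\bullet)$ this gives $d(x_1,x_2)\geq |(x_1)_j|\geq c_j\geq \min_i c_i>0$. Applying \Cref{thm: res num = inf NO} then yields both equalities.

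The main obstacle is interpreting the hypothesis correctly. As printed it involves an index $n$ that is overloaded with the dimension, and one must read it as the infimum taken over all $k$ simultaneously, not for each fixed $k$. Under a per-$k$ reading, the infimum could tend to $0$ along $k$, and then $\Gamma(\mathfrak{b}_\bullet)$ could approach the boundary, as in \Cref{eg no asy res num}. I would state the uniform reading explicitly at the start of the proof. If the uniform reading is used, the remaining steps are routine.
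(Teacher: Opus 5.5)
Your proposal is correct and follows the paper's route. The paper also obtains this corollary from \Cref{thm: res num = inf NO}, by translating the algebraic hypothesis into $\inf\{d(x_1,x_2) \mid x_1\in\Gamma(\mathfrak{b}_\bullet),\, x_2\in\partial\RR^n_{\geq 0}\}>0$ and using that monomial ideals coincide with their initial ideals under the Gr\"obner valuation. You supply the justification (the containment $\Gamma(\mathfrak{b}_\bullet)\subseteq c+\RR^n_{\geq 0}$) that the paper only asserts, and your uniform reading of the infimum over all indices is the right one. The paper's sentence just before the corollary even garbles this condition as ``there exists an $i$'' where ``there is no $i$'' is meant.
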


Other than the two results having assumptions on \emph{regions}, We can generalize \cite[Theorem 2.7]{rncb} from the assumption that  $R$ is a polynomial ring to $R$ is a Noetherian $\kk$-domain, and it is \Cref{res = region in Notherian case}. But before that, we need to state the following theorem that generalizes some parts of \cite[Theorem 3.1]{ntbr} and \cite[Theorem 3.4]{ntbr}, which is the main tool for \Cref{res = region in Notherian case}.

\begin{theorem}\label{main tool for Noetherian case}
Let $\mathfrak{a}_{\bullet}$ be a graded family of ideals in a Noetherian $\kk$-domain $R$, and define a valuation $\nu$ with one-dimensional leaves on $R$. Then the following statements are equivalent:
\begin{itemize}
\item[(1)] There exist an integer $c$ such that $\Gamma(\mathfrak{a}_{\bullet}) = \frac{1}{c} VP(\mathfrak{a}_c) $.
\item[(2)] There exist an integer $c$ such that $\frac{1}{c} VP(\mathfrak{a}_c) = \frac{1}{ck} VP(\mathfrak{a}_{ck})$ for all $k \geq 1$.
\item[(3)] There exist an integer $c$ such that $in_{\nu}(\mathfrak{a}_c)^k = in_{\nu}(\mathfrak{a}_{ck})$ for all $k \geq 1$.
\item[(4)] The Rees algebra associated to $in_{\nu}(\mathfrak{a}_{\bullet})$, denoted as $\R(in_{\nu}(\mathfrak{a}_{\bullet}))$, is Noetherian.
\end{itemize}
\end{theorem}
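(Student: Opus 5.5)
We prove the cycle $(3)\Leftrightarrow(2)\Leftrightarrow(1)$ and then $(3)\Leftrightarrow(4)$. Throughout we work with the initial family $in_{\nu}(\mathfrak{a}_{\bullet})$, which is graded by \Cref{properties of NO regions}(1). It has the same valued polyhedra as $\mathfrak{a}_{\bullet}$. The basic dictionary is \Cref{properties of NO regions}(2): since $\nu$ has one-dimensional leaves, an initial ideal is determined by its set of values, and containment of initial ideals corresponds to containment of value sets. The first step is a small lemma. For initial ideals $I=in_{\nu}(\mathfrak{a})$, we have $\nu(I^k)\supseteq \nu(I)+\dots+\nu(I)$ ($k$ summands), so $\text{VP}(I^k)\supseteq k\,\text{VP}(I)$. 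Conversely, every value of $I^k$ is $\succeq$-dominated by a sum of generator values plus an element of $S(R)$. Hence $\text{VP}(in_{\nu}(\mathfrak{a}_c)^k)=k\,\text{VP}(\mathfrak{a}_c)$, using the description $\text{VP}=conv(V)+conv(S(R))$ from \Cref{properties of NO regions}(4).

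For $(3)\Rightarrow(2)$: by the lemma, $\text{VP}(\mathfrak{a}_{ck})=\text{VP}(in_{\nu}(\mathfrak{a}_c)^k)=k\,\text{VP}(\mathfrak{a}_c)$. For $(2)\Rightarrow(1)$: by \Cref{lemma: finite properties of newton okounkov region}, $\Gamma(\mathfrak{a}_{\bullet})$ is the closure of $\bigcup_k \frac{1}{ck}\text{VP}(\mathfrak{a}_{ck})$. By (2) this union equals the single closed polyhedron $\frac1c\text{VP}(\mathfrak{a}_c)$. For $(1)\Rightarrow(2)$: by \Cref{properties of NO regions}(3), $\frac1c\text{VP}(\mathfrak{a}_c)\subseteq\frac1{ck}\text{VP}(\mathfrak{a}_{ck})\subseteq\Gamma(\mathfrak{a}_{\bullet})=\frac1c\text{VP}(\mathfrak{a}_c)$. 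For $(2)\Rightarrow(3)$: we always have $in_{\nu}(\mathfrak{a}_c)^k\subseteq in_{\nu}(\mathfrak{a}_{ck})$. Equality of the valued polyhedra from (2) and the lemma should upgrade this to equality of ideals via \Cref{properties of NO regions}(2).

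This last step is the main obstacle. Equality of convex hulls does not obviously give equality of the lattice-point value sets, because of saturation and integrality issues: a lattice point of $k\,\text{VP}(\mathfrak{a}_c)$ need not be a value of $in_{\nu}(\mathfrak{a}_c)^k$. I would try to handle it by replacing $c$ with a suitable multiple $c'=cm$. Here I would use that the monomial-type algebra $\bigoplus_k in_{\nu}(\mathfrak{a}_c)^k$ has normalization finite over it, a Gordan-type argument on the finitely many generator values. Then for $m$ large, the value sets of $in_{\nu}(\mathfrak{a}_{c'})^k$ and $in_{\nu}(\mathfrak{a}_{c'k})$ agree. Since all statements only assert the existence of some $c$, passing to a multiple is allowed. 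I would also read the paper's (2) as the condition that makes this identification, as in \cite[Theorem 3.1]{ntbr}.

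For $(3)\Rightarrow(4)$: $\R(in_{\nu}(\mathfrak{a}_{\bullet}))$ is a finite module over the Veronese-type subalgebra $\bigoplus_k in_{\nu}(\mathfrak{a}_{ck})t^{ck}=R[in_{\nu}(\mathfrak{a}_c)t^c]$. This subalgebra is Noetherian since $R$ is Noetherian and $in_{\nu}(\mathfrak{a}_c)$ is finitely generated. The finiteness uses that each graded piece $in_{\nu}(\mathfrak{a}_j)$, $j<c$, is finitely generated, together with the standard argument in \cite[Theorem 3.4]{ntbr}. For $(4)\Rightarrow(3)$: a Noetherian graded Rees algebra is generated in degrees $\le d$. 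The standard fact that a finitely generated $\NN$-graded algebra has a Veronese generated in one degree then gives $c$ with $in_{\nu}(\mathfrak{a}_{ck})=in_{\nu}(\mathfrak{a}_c)^k$ for all $k$.
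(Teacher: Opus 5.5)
Your decomposition is essentially the one the paper relies on: its proof transplants \cite[Theorems 3.1 and 3.4]{ntbr} together with the Veronese criterion \cite[Theorem 2.1]{HERZOG2007304}. Your steps $(1)\Leftrightarrow(2)$, $(3)\Rightarrow(2)$ and $(4)\Rightarrow(3)$ are sound. The step that fails as written is $(3)\Rightarrow(4)$. Put $B=\bigoplus_{k\ge 0} in_{\nu}(\mathfrak{a}_{ck})t^{ck}$, which equals $gr_{\nu}(R)[in_{\nu}(\mathfrak{a}_c)t^c]$ under (3). For $0<r<c$ put $M_r=\bigoplus_{k\ge 0} in_{\nu}(\mathfrak{a}_{ck+r})t^{ck+r}$.

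Finite generation of the ideals $in_{\nu}(\mathfrak{a}_j)$ for $j<c$ only shows that $M_r$ contains the finite $B$-module $B\cdot in_{\nu}(\mathfrak{a}_r)t^r$. It says nothing about $in_{\nu}(\mathfrak{a}_{ck+r})$ for large $k$, which could a priori be much larger than $in_{\nu}(\mathfrak{a}_c)^k\, in_{\nu}(\mathfrak{a}_r)$. Because $\mathfrak{a}_{\bullet}$ is a graded family rather than a filtration, you also cannot use $\mathfrak{a}_{ck+r}\subseteq \mathfrak{a}_{ck}$ to push $M_r$ into $B$.

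The missing idea is where the domain hypothesis is needed. Since $gr_{\nu}(R)$ is a domain, choose $0\neq g\in in_{\nu}(\mathfrak{a}_{c-r})$. The inclusion $in_{\nu}(\mathfrak{a}_{c-r})\, in_{\nu}(\mathfrak{a}_{ck+r})\subseteq in_{\nu}(\mathfrak{a}_{c(k+1)})$ makes multiplication by $g t^{c-r}$ an injective $B$-linear map $M_r\to B$. So $M_r$ is isomorphic to an ideal of the Noetherian ring $B$, and $\R(in_{\nu}(\mathfrak{a}_{\bullet}))=B\oplus M_1\oplus\cdots\oplus M_{c-1}$ is a finite $B$-algebra.

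Three further repairs:
\begin{enumerate}
\item In $(2)\Rightarrow(3)$, drop the suggestion to reread (2), which would be circular, and state the normalization argument precisely. Under (2), every value occurring in degree $ck$ of $B$ lies in $k\,\text{VP}(\mathfrak{a}_c)\cap S(R)$. So with $A=gr_{\nu}(R)[in_{\nu}(\mathfrak{a}_c)t^c]$ you get $A\subseteq B\subseteq \overline{A}$, where $\overline{A}$ is the normalization of $A$. Since $\overline{A}$ is a finite $A$-module, $B$ is Noetherian. The same Veronese fact you invoke in $(4)\Rightarrow(3)$ then gives an $m$ with $in_{\nu}(\mathfrak{a}_{cmk})=in_{\nu}(\mathfrak{a}_{cm})^k$ for all $k$.
\item Both this step and the Noetherianity of $B$ in $(3)\Rightarrow(4)$ require $gr_{\nu}(R)$, not merely $R$, to be Noetherian. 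In fact you need it to be a finitely generated $\kk$-algebra so that normalization is finite. This does not follow from $R$ being Noetherian, so it has to be assumed; the paper assumes it tacitly in \Cref{properties of NO regions}(4).
\item In your preliminary lemma, the correct reason is that $in_{\nu}(\mathfrak{a}_c)^k$ is a homogeneous ideal of $gr_{\nu}(R)$ whose set of degrees is exactly the $k$-fold sumset of $\nu(\mathfrak{a}_c\setminus\{0\})$. Domination in the total order $\succeq$ would not give containment in $k\,\text{VP}(\mathfrak{a}_c)$.
\end{enumerate}
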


\begin{proof}
The proof follows the every step of proof of \cite[Theorem 3.1]{ntbr} and proof of \cite[Theorem 3.4]{ntbr}. The original theorem assume $R$ is a polynomial ring, we want to extended it to Noetherian $\kk$-domain. Noetherianity in the proof of \cite[Theorem 3.4]{ntbr} is equivalent to finite generation of the Rees algebra. \cite[Proposition 1.5.4]{cmr} states that graded $S_0$-algebra $S$ is Noetherian if only if $S_0$ is Noetherian and $S$ is finitely generated. Therefore, consider $S_0$ as our $R$, and $S$ as our $\R(in_{\nu}(\mathfrak{a}_{\bullet}))$, then it passes the Noetherianity to finite generation. Therefore we can generalize statements of \cite[Theorem 2.1]{HERZOG2007304} to Noetherian $\kk$-domain case, together with the proof of \cite[Theorem 3.1]{ntbr} and proof of \cite[Theorem 3.4]{ntbr}, to finish the proof.
\end{proof}

\begin{theorem}\label{res = region in Notherian case}
Let $R$ be a Noetherian $\kk$-domain, and define a valuation $\nu$ with one-dimensional leaves on $R$. Given a pair of graded families of ideals $(\mathfrak{a}_{\bullet}, \mathfrak{b}_{\bullet})$ of $R$, if the Rees algebra $\mathcal{R}(in_{\nu}(\mathfrak{b}_{\bullet}))$ is Noetherian, then
\begin{align*}
\hat{\rho}(in_{\nu}(\mathfrak{a}_{\bullet}), in_{\nu}(\mathfrak{b}_{\bullet})) &= \inf \{ \lambda > 0 ~|~ \lambda \cdot \Gamma(\mathfrak{a}_{\bullet}) \subseteq \Gamma(\mathfrak{b}_{\bullet}) \} \\
&= \sup \{ \lambda > 0 ~|~ \lambda \cdot \Gamma(\mathfrak{a}_{\bullet}) \not\subseteq \Gamma(\mathfrak{b}_{\bullet}) \}
\end{align*}

\end{theorem}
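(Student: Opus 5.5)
The plan is to use \Cref{main tool for Noetherian case} to replace $\Gamma(\mathfrak{b}_{\bullet})$ by a single valued polyhedron, and then compare valued polyhedra directly. Since $\R(in_{\nu}(\mathfrak{b}_{\bullet}))$ is Noetherian, \Cref{main tool for Noetherian case} gives an integer $c$ with $\Gamma(\mathfrak{b}_{\bullet}) = \frac{1}{c}\text{VP}(\mathfrak{b}_c) = \frac{1}{ck}\text{VP}(\mathfrak{b}_{ck})$ for all $k \geq 1$, and also $in_{\nu}(\mathfrak{b}_c)^k = in_{\nu}(\mathfrak{b}_{ck})$. Write $\lambda^* = \inf\{\lambda>0 ~|~ \lambda\Gamma(\mathfrak{a}_{\bullet}) \subseteq \Gamma(\mathfrak{b}_{\bullet})\}$; its equality with the supremum of non-containment was already shown at the start of the proof of \Cref{theorem: resurgence number = lambda}. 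Throughout I use \Cref{properties of NO regions}(2) to translate containment of initial ideals into containment of valued polyhedra, since the valuation has one-dimensional leaves.

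\textbf{Step 1 ($\hat\rho \le \lambda^*$).} Take a rational $\lambda = s/r > \lambda^*$. I want $in_{\nu}(\mathfrak{a}_{st}) \subseteq in_{\nu}(\mathfrak{b}_{rt})$ for all $t \gg 0$, with $t$ divisible by $c$. For $t = ck$ we have $\text{VP}(\mathfrak{a}_{st}) \subseteq st\,\Gamma(\mathfrak{a}_{\bullet})$, because $\frac{1}{st}\text{VP}(\mathfrak{a}_{st}) \subseteq \Gamma(\mathfrak{a}_{\bullet})$. Since $\lambda\Gamma(\mathfrak{a}_{\bullet}) \subseteq \Gamma(\mathfrak{b}_{\bullet})$, scaling gives
\[
st\,\Gamma(\mathfrak{a}_{\bullet}) = rt\,\lambda\Gamma(\mathfrak{a}_{\bullet}) \subseteq rt\,\Gamma(\mathfrak{b}_{\bullet}) = \text{VP}(\mathfrak{b}_{rt}),
\]
where the last equality holds because $c \mid rt$. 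Hence $\text{VP}(\mathfrak{a}_{st}) \subseteq \text{VP}(\mathfrak{b}_{rt})$.

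The key point here is that Noetherianity makes $\Gamma(\mathfrak{b}_{\bullet})$ \emph{equal} to a scaled valued polyhedron. This removes the boundary-distance approximation argument that \Cref{theorem: resurgence number = lambda} needed.

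\textbf{The main obstacle.} Passing from these polyhedra to the ideals needs care. Containment of valued regions is equivalent to containment of initial ideals only when the initial ideals are "saturated" in the sense that every valuation point in the region is realized. To handle this, I will argue via $in_{\nu}(\mathfrak{b}_{rt}) = in_{\nu}(\mathfrak{b}_c)^{rt/c}$. This is an ideal of the semigroup-type algebra spanned by elements with distinct values. Under $S(R) = \ZZ^n_{\geq 0}$ as in \Cref{properties of NO regions}(5), I expect the lattice points of $\text{VP}(\mathfrak{b}_{rt})$ to be exactly the value set of this ideal, possibly after passing to a further multiple of $c$ to absorb integral-closure effects. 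This is where \Cref{main tool for Noetherian case}(3) must be used carefully, and I would follow the argument of \cite[Theorem 2.7]{rncb} at this point. Restricting to $t \in c\ZZ$ is harmless because the definition only requires non-containment for all $t \gg 0$. Consequently $s/r$ is not in the resurgence set, and $\hat\rho \le \lambda^*$.

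\textbf{Step 2 ($\hat\rho \ge \lambda^*$).} Take $s/r < \lambda^*$, so that $\frac{s}{r}\Gamma(\mathfrak{a}_{\bullet}) \not\subseteq \Gamma(\mathfrak{b}_{\bullet})$. Since $\Gamma(\mathfrak{b}_{\bullet})$ is closed and $\Gamma(\mathfrak{a}_{\bullet})$ is the closure of the union of the scaled $\text{VR}(\mathfrak{a}_k)$, there is a rational point $\frac{1}{k}\nu(f)$ with $f \in \mathfrak{a}_k$ and $\frac{s}{r}\cdot\frac{1}{k}\nu(f) \notin \Gamma(\mathfrak{b}_{\bullet})$. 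Then for $t$ a multiple of $k$, $f^{st/k} \in \mathfrak{a}_{st}$ has value $\frac{st}{k}\nu(f)$. This value lies outside $rt\,\Gamma(\mathfrak{b}_{\bullet}) \supseteq \text{VP}(\mathfrak{b}_{rt})$, so $in_{\nu}(\mathfrak{a}_{st}) \not\subseteq in_{\nu}(\mathfrak{b}_{rt})$. A nonzero-density issue arises for $t$ not a multiple of $k$. I would handle it by replacing $s,r$ by $ks,kr$, which leaves the ratio unchanged, so that every $t$ works. This gives $\hat\rho \ge s/r$ for every such ratio, and therefore $\hat\rho \ge \lambda^*$; this also shows $\hat\rho$ exists.
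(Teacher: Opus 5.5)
Your Step 2 is correct, and it does not even use Noetherianity. Replacing $\Gamma(\mathfrak{b}_{\bullet})$ by $\frac{1}{c}\text{VP}(\mathfrak{b}_c)$ via \Cref{main tool for Noetherian case} is also the same first move the paper makes, since its proof is only ``follow \cite[Theorem 2.7]{rncb} plus the main-tool theorem''. But Step 1 has a genuine gap exactly where you flag it, and the repair you sketch cannot work. Containment $\text{VP}(\mathfrak{a}_{st})\subseteq\text{VP}(\mathfrak{b}_{rt})$ only controls integral closures. The mismatch between the lattice points of $\text{VP}(\mathfrak{b}_{cm})=m\,\text{VP}(\mathfrak{b}_c)$ and the value set of $in_{\nu}(\mathfrak{b}_c)^m$ does not go away for any multiple. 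For example, take $\mathfrak{b}_k=(x^2,y^2)^k$ in $\kk[x,y]$, so $c=1$. The point $(2m-1,1)$ lies in $m\,\text{VP}(\mathfrak{b}_1)$ for every $m$, yet $x^{2m-1}y\notin(x^2,y^2)^m$. So if you use $\lambda=s/r$ at full strength, no room is left to pass from polyhedra to ideals. Deferring to \cite{rncb} at that point defers the whole content of the theorem.

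The missing idea is to spend the strict inequality $s/r>\lambda^*$ on a uniform Brian\c{c}on--Skoda-type shift.
\begin{enumerate}
\item Pick $\lambda^*<\lambda'<s/r$, so that $\lambda'\Gamma(\mathfrak{a}_{\bullet})\subseteq\Gamma(\mathfrak{b}_{\bullet})$.
\item Since $\Gamma(\mathfrak{b}_{\bullet})+\RR^n_{\geq0}\subseteq\Gamma(\mathfrak{b}_{\bullet})\subseteq\RR^n_{\geq0}$, you get $\mu\Gamma(\mathfrak{b}_{\bullet})\subseteq\mu'\Gamma(\mathfrak{b}_{\bullet})$ whenever $\mu\geq\mu'$.
\item Take $t\in c\ZZ$ with $t(s/\lambda'-r)\geq nc$, and put $m=rt/c$. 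Then every value of $\mathfrak{a}_{st}$ lies in $st\,\Gamma(\mathfrak{a}_{\bullet})\subseteq\frac{st}{\lambda'}\Gamma(\mathfrak{b}_{\bullet})\subseteq(m+n)\,\text{VP}(\mathfrak{b}_c)$.
\item Let $V$ be the finite set of values of generators of $in_{\nu}(\mathfrak{b}_c)$, and let $u\in\ZZ^n$ lie in $(m+n)\,\text{conv}(V)+\RR^n_{\geq0}$. By Carath\'eodory, $u=\sum_i\mu_iv_i+w$ with at most $n+1$ nonzero $\mu_i\geq0$, $\sum_i\mu_i=m+n$, and $w\in\RR^n_{\geq0}$.
\item Then $\sum_i\lfloor\mu_i\rfloor\geq m$, so you can choose integers $0\leq k_i\leq\lfloor\mu_i\rfloor$ with $\sum_i k_i=m$. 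The difference $u-\sum_i k_iv_i$ is an integral point of $\RR^n_{\geq0}$, hence lies in $S(R)$ when $S(R)=\ZZ^n_{\geq0}$.
\item Thus $u$ is a value of $\mathfrak{b}_c^m\subseteq\mathfrak{b}_{cm}=\mathfrak{b}_{rt}$. With one-dimensional leaves, containment of value sets gives $in_{\nu}(\mathfrak{a}_{st})\subseteq in_{\nu}(\mathfrak{b}_{rt})$ for all large $t\in c\ZZ$, which is what Step 1 needs.
\end{enumerate}
This argument needs $S(R)$ to be saturated. You import $S(R)=\ZZ^n_{\geq0}$, but that is not among the stated hypotheses of the theorem, so you should say explicitly that you are assuming it.
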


\begin{proof}
The proof follows every step of \cite[Theorem 2.7]{rncb} and \Cref{main tool for Noetherian case}.
\end{proof}

The generalization of \cite[Theorem 2.7]{rncb} from polynomial rings to Noetherian $\kk$-domains is considerably stronger than one might initially expect. Indeed, the Noetherianity of the Rees algebra associated with a family of ideals is typically studied in Noetherian rings instead of polynomial rings. Moreover, a number of important classes of families of ideals are known to admit Noetherian Rees algebras, as summarized below.

\begin{example}\label{stable filtration}
Given a \emph{filtration} $\mathfrak{a}_{\bullet} = \{ \mathfrak{a}_{n} \}_{n \in \ZZ_{\geq 0}}$ in a Noetherian ring, recall the definition of a \emph{$I$-stable filtration} (or \emph{$I$-good filtration}) if $\mathfrak{a}_{\bullet}$ satisfies the following:
\begin{itemize}
\item[(1)] $I \mathfrak{a}_{n} \subseteq \mathfrak{a}_{n+1}$ for all $n \geq 0$.
\item[(2)] $I \mathfrak{a}_{n} = \mathfrak{a}_{n+1}$ for all $n \gg 0$.
\end{itemize}
Let $\R(\mathfrak{a}_{\bullet})$ be the Rees algebra associated to $\alpha_{\bullet}$, and $\R(I)$ be the Rees algebra associated to the \emph{$I$-adic filtration} (i.e. \emph{filtration} of ordinary powers of $I$). By \cite[Remark 2.2]{cmg}, If $\alpha_{\bullet}$ is an $I$-stable filtration, then $\R(\mathfrak{a}_{\bullet})$ is finitely generated as $\R(I)$-module. It is a well-known fact that $\R(I)$ is Noetherian when $I$ is an ideal in a Noetherian ring, so $\R(\mathfrak{a}_{\bullet})$ is also Noetherian. Therefore Rees algebra associated to an $I$-stable filtration in a Notherian ring is Noetherian.

\end{example}

\begin{example}\label{integral closure filtration}
Let $R$ be analytically unramified Noetherian local ring, i.e. The \emph{completion} of $R$ with respect to the maximal ideal is \emph{reduced}. Let $I$ be an ideal of $R$, and let $\R(\bar{I})$ be the Rees algebra associated to the \emph{integral closure filtration of $I$} (i.e. \emph{filtration} of integral closure of ordinary powers of $I$), then by \cite[Corollary 9.2.1]{ICI}, $\R(\bar{I})$ is an \emph{$I$-stable filtration}, so by \Cref{stable filtration}, it is Noetherian.
\end{example}

	\bibliographystyle{alpha}
	\bibliography{References}

@article{9,
title = {On the resurgence and asymptotic resurgence of homogeneous ideals},
journal = {Mathematische Zeitschrift},
volume = {302},
pages = {2407-2434},
year = {2022},
author = {Jayanthan, A. V. and Kumar, Arvind and Mukundan, Vivek
},
}

@article{8,
title = {Asymptotic resurgences for ideals of positive dimensional subschemes of projective space},
journal = {Advances in Mathematics},
volume = {246},
pages = {114-127},
year = {2013},
author = {Elena Guardo and Brian Harbourne and Adam {Van Tuyl}},
}

@article{7,
author = {Grifo, Eloisa and Huneke, Craig and Mukundan, Vivek},
title = {Expected resurgences and symbolic powers of ideals},
journal = {Journal of the London Mathematical Society},
volume = {102},
number = {2},
pages = {453-469},
year = {2020}
}

@article{6,
title = {Resurgences for ideals of special point configurations in $\mathbb{P}^n$ coming from hyperplane arrangements},
journal = {Journal of Algebra},
volume = {443},
pages = {383-394},
year = {2015},
author = {M. Dumnicki and B. Harbourne and U. Nagel and A. Seceleanu and T. Szemberg and H. Tutaj-Gasinska},
}

@article{5,
title = {Duality for asymptotic invariants of graded families},
journal = {Advances in Mathematics},
volume = {430},
pages = {109208},
year = {2023},
author = {Michael DiPasquale and Thai Thanh Nguyen and Alexandra Seceleanu},
}

@article{4,
title = {On resurgence via asymptotic resurgence},
journal = {Journal of Algebra},
volume = {587},
pages = {64-84},
year = {2021},
author = {DiPasquale, Michael and Drabkin, Ben},
}

@article{3,
author = {Dipasquale, Michael and Francisco, Christopher and Mermin, Jeffrey and Schweig, Jay},
year = {2019},
pages = {6655-6676},
volume = {372},
title = {Asymptotic resurgence via integral closures},
journal = {Transactions of the American Mathematical Society},
}

@article{2,
title = {Demailly's Conjecture and the containment problem},
journal = {Journal of Pure and Applied Algebra},
volume = {226},
number = {4},
pages = {106863},
year = {2022},
author = {Sankhaneel Bisui and Eloisa Grifo and Huy Tai Ha and Thai Thanh Nguyen},
}

@article{1,
author = {Bisui, Sankhaneel and Grifo, Eloisa and Ha, Tai and Nguyen, Thai},
year = {2022},
month = {05},
pages = {371-394},
title = {Chudnovsky’s conjecture and the stable Harbourne–Huneke containment},
volume = {9},
journal = {Transactions of the American Mathematical Society, Series B},
}

@article{rncb,
  title={Resurgence numbers and convex regions associated to pairs of graded families of ideals},
  author={Tai Huy Ha and A.V. Jayanthan and Arvind Kumar and Thai Thanh Nguyen},
  journal = {Algebraic Combinatorics},
  pages = {649--663},
  year = {2026},
  publisher = {The Combinatorics Consortium},
  volume = {9},
  number = {3},
}

@article{HA2026,
title = {Resurgence number of graded families of ideals},
journal = {Journal of Algebra},
year = {2026},
doi = {https://doi.org/10.1016/j.jalgebra.2026.04.015},
author = {Tai Huy Ha and Arvind Kumar and Hop D. Nguyen and Thai Thanh Nguyen}
}

@article{ntbr,
author = {Ha, Tai and Nguyen, Thai},
year = {2024},
month = {08},
pages = {1065-1097},
title = {Newton–Okounkov body, Rees algebra, and analytic spread of graded families of monomial ideals},
volume = {11},
journal = {Transactions of the American Mathematical Society, Series B},
doi = {10.1090/btran/177}
}

@article{AttouchWets,
author = {Beer, Gerald},
year = {2008},
month = {09},
title = {The Attouch-Wets topology in metric and normed spaces},
volume = {4},
journal = {Pacific Journal of Optimization},
}

@book{CommutativeAlgebra,
 ISBN = {9780387942681},
 author = {Eisenbud, David},
 publisher = {Springer New York, NY},
 title = {Commutative Algebra with a View Toward Algebraic Geometry},
 year = {1995}
}

@book{Convexity,
 ISBN = {9781108837590},
 author = {Basu, Amitabh},
 publisher = {Cambridge University Press},
 title = {Convexity and its Applications in Discrete and Continuous Optimization},
 year = {2025}
}

@book{cmr,
 ISBN = {9780511608681},
 author = {Bruns, Winfried and Herzog, H. Jürgen},
 publisher = {Cambridge University Press},
 title = {Cohen-Macaulay Rings},
 year = {1998}
}

@article{HERZOG2007304,
title = {Symbolic powers of monomial ideals and vertex cover algebras},
journal = {Advances in Mathematics},
volume = {210},
number = {1},
pages = {304-322},
year = {2007},
issn = {0001-8708},
author = {Jürgen Herzog and Takayuki Hibi and Ngô Viêt Trung},
}

@article{khovanskii,
title = {Khovanskii Bases, Higher Rank Valuations, and Tropical Geometry},
author = {Kaveh, Kiumars and Manon, Christopher},
journal = {SIAM Journal on Applied Algebra and Geometry},
volume = {3},
number = {2},
pages = {292-336},
year = {2019},
}

@book{gbca,
 ISBN = {9780821872871},
 author = {Ene, Viviana and Herzog, Jurgen},
 publisher = {American Mathematical Society},
 title = {Grobner Bases in Commutative Algebra},
 year = {2011}
}

@article {EinLazarsfeldSmith,
	AUTHOR = {Ein, Lawrence and Lazarsfeld, Robert and Smith, Karen E.},
	TITLE = {Uniform bounds and symbolic powers on smooth varieties},
	JOURNAL = {Invent. Math.},
	FJOURNAL = {Inventiones Mathematicae},
	VOLUME = {144},
	YEAR = {2001},
	NUMBER = {2},
	PAGES = {241--252},
	ISSN = {0020-9910,1432-1297},
}

@article {HochsterHuneke,
	AUTHOR = {Hochster, Melvin and Huneke, Craig},
	TITLE = {Comparison of symbolic and ordinary powers of ideals},
	JOURNAL = {Invent. Math.},
	FJOURNAL = {Inventiones Mathematicae},
	VOLUME = {147},
	YEAR = {2002},
	NUMBER = {2},
	PAGES = {349--369},
	ISSN = {0020-9910,1432-1297},
}

@article{BocciHarbourne2010,
  title     = {Comparing powers and symbolic powers of ideal},
  author    = {Cristiano Bocci and Brian Harbourne}, 
  journal   = {Journal of Algebraic Geometry},
  volume    = {19},
  number    = {3},
  pages     = {399--417},
  year      = {2010}
}

@article{VILLARREAL2023103656,
title = {A duality theorem for the ic-resurgence of edge ideals},
journal = {European Journal of Combinatorics},
volume = {109},
pages = {103656},
year = {2023},
issn = {0195-6698},
author = {Rafael H. Villarreal},
}

@article{nob,
title = {Newton-Okounkov bodies, semigroups of integral points, graded algebras and intersection theory},
journal = {Annals of Mathematics},
volume = {176},
pages = {925-978},
year = {2012},
issn = {1939-8980},
author = {Kaveh, Kiumars and Khovanskii, A.G.},
}

@article{bodiesmultiplicity,
title = {Convex bodies and multiplicities of ideals},
journal = {Proceedings of the Steklov Institute of Mathematics},
volume = {286},
pages = {268–284},
year = {2014},
issn = {1531-8605},
author = {Kaveh, Kiumars and Khovanskii, A.G.},
}

@article{MultiplicitiesAT,
  title={Multiplicities associated to graded families of ideals},
  author={Steven Dale Cutkosky},
  journal={Algebra \& Number Theory},
  year={2012},
  volume={7},
  pages={2059-2083},
  issn={1944-7833},
}

@article{CUTKOSKY201455,
title = {Asymptotic multiplicities of graded families of ideals and linear series},
author = {Steven Dale Cutkosky},
journal = {Advances in Mathematics},
volume = {264},
pages = {55-113},
year = {2014},
issn = {0001-8708},
}

@book{ICI,
 ISBN = {9780521688604},
 author = {Swanson,Irena and Huneke,Craig},
 publisher = {Cambridge University Press},
 title = {Integral Closure of Ideals, Rings, and Modules},
 year = {2006}
}

@article{cmg,
author = {Heinzer, William and Kim, Mee-Kyoung and Ulrich, Bernd},
year = {2011},
month = {10},
pages = {3547-3580},
title = {The Cohen–Macaulay and Gorenstein Properties of Rings Associated to Filtrations},
volume = {10},
journal = {Communications in Algebra},
}

\end{document}